\newcommand{\br}{\mathbb R}
\newcommand{\bh}{\mathbb H}
\newcommand{\bn}{\mathbb N}
\newcommand{\bz}{\mathbb Z}
\newcommand{\bq}{\mathbb Q}
\newcommand{\cf}{\mathcal F} 
\newcommand{\cl}{\mathcal L} 
\newcommand{\cg}{\mathcal G}
\newcommand{\Aut}{\mathrm{Aut}} 
\newcommand{\PSL}{\mathrm{PSL}}
\newtheorem{thm}{Theorem}
\newtheorem{prob}{Problem}
\newtheorem{prop}{Proposition}
\newtheorem{lem}{Lemma}
\newtheorem{cor}{Corollary}
\theoremstyle{definition}
\newtheorem{definition}{Definition}
\theoremstyle{remark}
\newtheorem{rem}{Remark}
\newtheorem{ex}{Example}
\numberwithin{equation}{section}
\newcommand{\modu}[1]{\ (\mathrm{mod}\ #1)}
\title[combinatorics of $k$-Farey graphs]{Combinatorics of $k$-Farey graphs}
\date{October 21, 2018}
\author{Jonah Gaster, Miguel Lopez, Emily Rexer, Zo\"e Riell, and Yang Xiao}
\address{Department of Mathematics, McGill University \\
Burnside Hall, 805 Sherbrooke Street West, Montreal, Quebec H3A 0B9}
\email{jbgaster@gmail.com\\}
\address{Math Deptartment, Boston University \\ 
111 Cummington Mall, Boston, MA 02215 }
\email{mlopez3@bc.edu\\}
\address{Mathematics and Computer Science, Emory University \\
Mail Stop: 1131-002-1AC, Atlanta, GA 30322}
\email{emily.rexer@gmail.com\\}
\address{Department of Mathematics and Statistics, Smith College \\
Burton Hall 115, Northampton, MA 01063}
\email{zoe.riell@gmail.com\\}
\address{Department of Mathematics, Brown University \\
Box 1917, 151 Thayer Street, Providence, RI 02912}
\email{yang\_xiao@brown.edu\\}
\begin{document}

\begin{abstract}
With an eye towards studying curve systems on low-complexity surfaces, we introduce and analyze the $k$-Farey graphs $\cf_k$ and $\cf_{\leqslant k}$, two natural variants of the Farey graph $\cf$ in which we relax the edge condition to indicate intersection number $=k$ or $\le k$, respectively. 

The former, $\cf_k$, is disconnected when $k>1$. In fact, we find that the number of connected components is infinite if and only if $k$ is not a prime power. 
Moreover, we find that each component of $\cf_k$ is an infinite-valence tree whenever $k$ is even, and $\Aut(\cf_k)$ is uncountable for $k>1$.

As for $\cf_{\leqslant k}$, Agol obtained an upper bound of $1+\min\{p:p\text{ is a prime}>k\}$ for both chromatic and clique numbers, and observed that this is an equality when $k$ is either one or two less than a prime.
We add to this list the values of $k$ that are three less than a prime equivalent to $11\modu{12}$, and we show computer-assisted computations of many values of $k$ for which equality fails.
\end{abstract}

\keywords{Farey graph, Curves on surfaces}
\maketitle


\section{Introduction}

The Farey graph $\cf$ is constructed as follows: the vertices
\[
\left\{ \frac p q \ : \; p,q\in\bz, \; \gcd ( p,q ) =1 \right\} \cup \left\{ \frac 1 0 \right\} 
\]
are equipped with a determinant pairing between $p/q$ and $a/b$ given by
\[
d_\cf\left( \frac p q, \frac a b\right) =
\left| \det \left(
\begin{array}{cc}
p & a \\ 
q & b
\end{array}\right)
\right|,
\] 
and we add edges between $p/q$ and $a/b$ whenever $d_\cf(p/q,a/b)=1$. 
The Farey graph admits a description as the `curve graph' of some low-complexity surfaces, in which the vertices correspond to isotopy classes of simple closed curves, and in which edges correspond to the smallest possible intersection number on the surface---in the relevant cases, these are one or two.

Variations of the curve graph in which the edge condition is altered to take into account other intersection numbers has recently attracted attention \cite{Schaller,ICERM2,Aougab,JMM}. 
Motivated to study the smallest complexity nontrivial such examples,
we introduce the \emph{$k$-Farey graphs} $\cf_k$ and $\cf_{\leqslant k}$: These graphs have the same vertex set as $\cf$, but we form edges of $\cf_k$ (resp.~$\cf_{\leqslant k}$) whenever
\[
d_\cf \left( \frac p q, \frac a b \right)=  k \ \ \ \text{ ( resp.~ } \le k \text{ )}~. 
\]  

This paper investigates combinatorial properties of $\cf_k$ and $\cf_{\leqslant k}$. 
For both we exploit a natural map from the vertices of $\cf$ to lines in the finite plane $\left( \bz/ r\bz \right)^2$ (see \autoref{lines}). This reduction was first used by Agol, who was motivated by exploring boundary slopes of exceptional Dehn fillings for a finite-volume cusped hyperbolic 3-manifold \cite[Lemma 8.2]{Agol}. In our language, Agol found an upper bound for the chromatic number of $\cf_{\leqslant k}$ by choosing $r$ to be the smallest prime larger than $k$. 
For $\cf_k$ we find $r=k$ to be useful: when $k$ is a prime power these lines index the components.

\begin{figure}
	\centering
	\includegraphics[height=8cm]{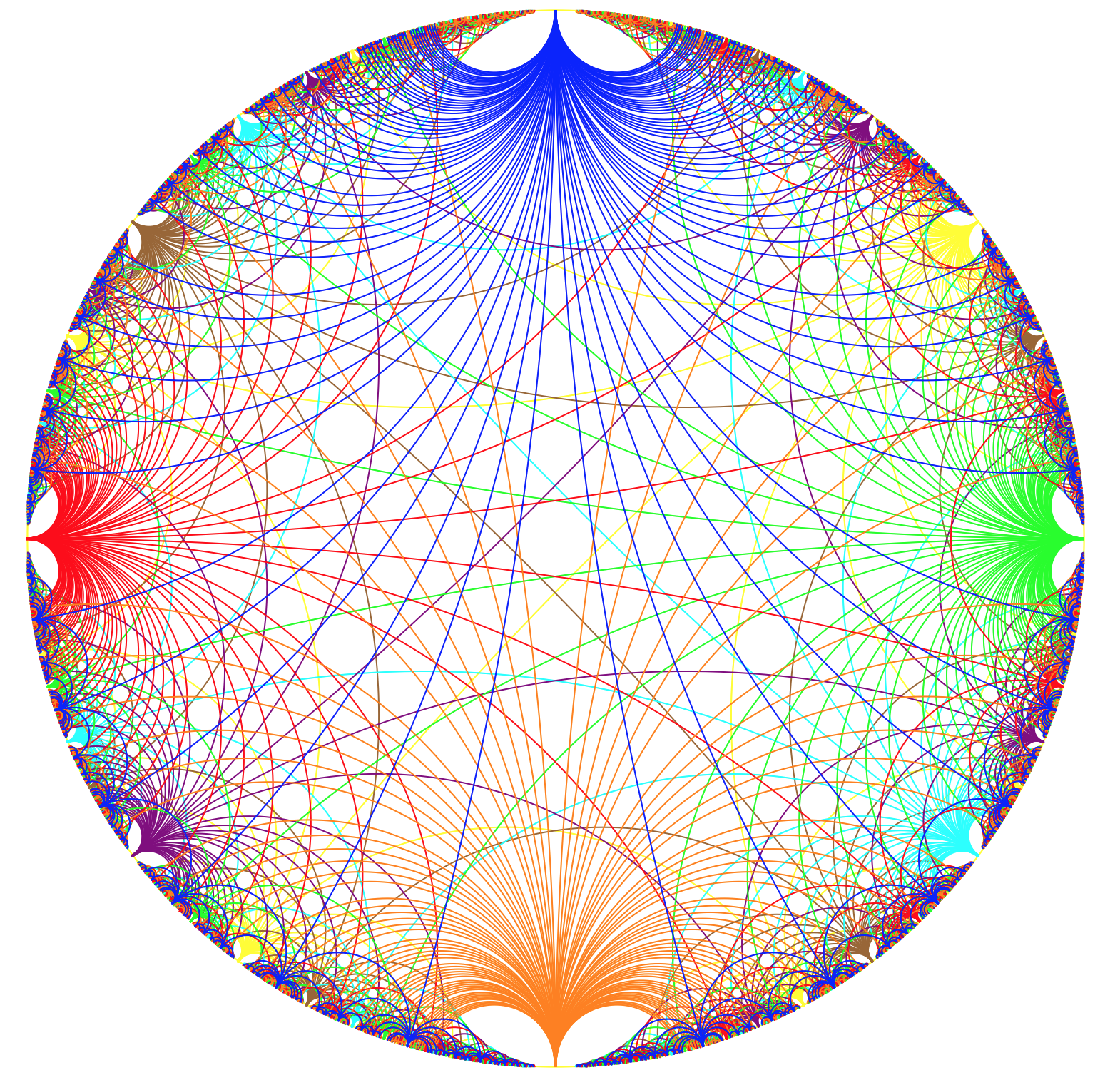}
	\caption{Pieces of $\cf_7$, colored according to the connected components.}
	\label{Farey7}
\end{figure}

\begin{thm}
\label{exact k}
The number of connected components of $\cf_k$ is given by \\
\[
b_0\left( \cf_k \right) =
\left \{ 
	\begin{array}{ll}
		p^{\ell-1}(p+1) & \text{ if }k=p^\ell, \text{ for }p \text{ a prime and }  \ell>0 \\
		\\
		\infty & \text{otherwise}.
	\end{array}
\right.
\]
\end{thm}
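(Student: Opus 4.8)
The plan is to compute $b_0(\cf_k)$ as a product of two factors extracted from a reduction modulo $k$ (the tool recorded in \autoref{lines}), and then to analyze a single factor by a connectivity argument inside the ordinary Farey graph. Identify a vertex $p/q$ of $\cf$ with the primitive column vector $(p,q)\in\bz^2$, taken up to sign, so that the edge relation of $\cf_k$ reads $|\det(u,v)|=k$. A primitive integer vector has order exactly $k$ in $(\bz/k\bz)^2$, so its reduction spans a cyclic submodule isomorphic to $\bz/k\bz$ — a \emph{line} — and this yields a map $r\colon V(\cf)\to\bp^1(\bz/k\bz)$, where $\bp^1(\bz/k\bz)$ denotes the set of such lines. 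The first point is that $r$ is constant on edges of $\cf_k$: if $|\det(u,v)|=k$ then $\det(\bar u,\bar v)=0$ in $\bz/k\bz$, and since $\bar v$ is unimodular it completes to an element of $\mathrm{SL}_2(\bz/k\bz)$, which forces $\bar u\in\langle\bar v\rangle$. Hence each fibre $r^{-1}(\ell)$ is a union of components of $\cf_k$; every line is a value of $r$ (as in \autoref{lines}); and since $\mathrm{SL}_2(\bz)\to\mathrm{SL}_2(\bz/k\bz)$ is surjective and $\mathrm{SL}_2(\bz/k\bz)$ is transitive on lines, all the subgraphs $\cf_k[r^{-1}(\ell)]$ are isomorphic. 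Therefore $b_0(\cf_k)=|\bp^1(\bz/k\bz)|\cdot b_0(\cf_k[r^{-1}(\ell_0)])$ for $\ell_0$ the line of $(1,0)$, and clearing the common factor $k$ from the denominators of the vertices on $\ell_0$ (exactly the $a/b$ with $k\mid b$) is a graph isomorphism $\cf_k[r^{-1}(\ell_0)]\cong\cf[V_k]$, where $\cf[V_k]$ is the subgraph of $\cf$ induced on $V_k:=\{p/q:\gcd(p,k)=1\}$. It then remains to (i) evaluate $|\bp^1(\bz/k\bz)|$, which a short count identifies with $k\prod_{p\mid k}(1+1/p)$, equal to $p^{\ell-1}(p+1)$ when $k=p^\ell$; and (ii) decide connectivity of $\cf[V_k]$. (For $k=1$, $\cf_1=\cf$ and $b_0=1$; so assume $k\geq2$.)

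For the prime-power case $k=P^\ell$, I claim $\cf[V_k]$ is connected, which together with (i) gives $b_0(\cf_k)=P^{\ell-1}(P+1)$. The vertices deleted from $\cf$ to form $\cf[V_k]$ are those $a/b$ with $P\mid a$; no two of these are adjacent in $\cf$ (an edge would give $P\mid\det=\pm1$). For such a deleted vertex $a/b$, every Farey neighbour has numerator congruent mod $P$ to the numerator $c$ of any fixed neighbour, and $P\nmid c$ since $ad-bc=\pm1$; moreover consecutive neighbours in the bi-infinite fan of $a/b$ are themselves Farey-adjacent. So the fan of each deleted vertex is a bi-infinite path contained in $V_k$, and any path in the connected graph $\cf$ between two vertices of $V_k$ can be detoured, one deleted vertex at a time, to avoid the deleted vertices entirely — these detours do not interfere since deleted vertices are pairwise non-adjacent. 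Hence $\cf[V_k]$ is connected.

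For the remaining case, suppose $k$ is not a prime power, and produce infinitely many components of $\cf[V_k]$ via \emph{trapped intervals}. The elementary input is that for an adjacent pair $m/1,(m+1)/1$ the open interval $(m,m+1)$ is Farey-convex: any Farey neighbour of a vertex whose value lies strictly between $m$ and $m+1$ again has value in $[m,m+1]$, because the geodesic $\overline{m/1\,,\,(m+1)/1}$ is an edge of the Farey tessellation and edges do not cross. So if both $m$ and $m+1$ are non-coprime to $k$ (hence both deleted), then the set of $V_k$-vertices in $(m,m+1)$ is a union of components of $\cf[V_k]$. Since $k$ has two distinct prime factors $P\neq Q$, the system $m\equiv0\pmod P$, $m\equiv-1\pmod Q$ is solvable, and so is $m+PQt$ for every $t\in\bz$, giving infinitely many pairwise disjoint such intervals. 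Each is nonempty: for $q\geq2$ avoiding the finitely many residues $mq\equiv-1\pmod\ell$, $\ell$ a prime dividing $k$, the fraction $(mq+1)/q=m+1/q$ lies in $(m,m+1)$ and has numerator coprime to $k$. Hence $b_0(\cf[V_k])=\infty$ and $b_0(\cf_k)=\infty$.

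The routine ingredients are the reduction modulo $k$ and the count $|\bp^1(\bz/k\bz)|=k\prod_{p\mid k}(1+1/p)$. The substantive content lies in the dichotomy "no two consecutive integers are both non-coprime to $k$" $\iff$ "$k$ is a prime power", realized on the graph side by the fan-detour argument in one direction and the trapped Farey-convex intervals in the other. I expect the cleanest formulation of Farey-convexity and the bookkeeping of the fan detours to be the main obstacle; the number-theoretic steps (the congruence constructions, the Dedekind-type identity) are straightforward.
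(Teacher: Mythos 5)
Your proposal is correct, and while its first stage---reduction mod $k$ to lines in $(\bz/k\bz)^2$, constancy of the reduction on edges of $\cf_k$, transitivity of $\mathrm{SL}_2(\bz)$ on the fibres, and the count $|\cl_k|=p^{\ell-1}(p+1)$---coincides with \autoref{constant}, \autoref{transitive lines}, and \autoref{counting lines}, both of your connectivity arguments genuinely differ from the paper's. Your key extra step, dividing denominators by $k$ to identify the fibre over the line of $(1,0)$ with the induced subgraph of $\cf$ on $V_k=\{p/q:\gcd(p,k)=1\}$, does not appear in the paper and is what lets you work inside the ordinary Farey tessellation. For $k=p^\ell$ the paper instead fixes the basepoint $1/1$ and runs a division-algorithm descent on denominators within the fibre (\autoref{connected}), using that two candidate reductions correspond to consecutive integers so at least one stays primitive; your detour argument---the deleted vertices form an independent set and the fan of each deleted vertex is a bi-infinite path lying in $V_k$---is a cleaner route to the same connectivity and isolates the structural reason it works. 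For $k$ not a prime power the paper builds the level exhaustion $\cf_k^{(m)}$, proves eventual monotonicity of $b_0$ along it via the predecessor analysis (\autoref{monotone}), and exhibits isolated vertices $(q+p^\ell n)/p^\ell$ at infinitely many levels (\autoref{isolated vertices}); your trapped Farey-convex intervals $(m,m+1)$ with $m\equiv 0\modu{P}$ and $m\equiv -1\modu{Q}$ reach the same conclusion more directly and make the dichotomy transparent: two consecutive integers can both fail to be coprime to $k$ exactly when $k$ has two distinct prime factors. The only points to spell out carefully in a full write-up are (a) that an edge of $\cf_k$ forces the two reductions to span the \emph{same} line (both reductions are unimodular, so the scalar relating them is a unit), and (b) the nonemptiness of each trapped interval, which your CRT choice of the denominator $q$ handles; neither is a gap.
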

\bigskip

To prove connectivity of the preimages of lines when $k=p^\ell$, we identify basepoints and employ a reducing strategy to show that any point in the preimage of a line can be connected through a sequence of edges to the basepoint.

For other values of $k$, the proof of \autoref{exact k} is roughly as follows. We introduce an exhaustion 
\[ 
\cf_k^{(1)} \subset \cf_k^{(2)} \subset \ldots \subset \cf_k
\] satisfying:
\begin{enumerate}
\item For $m> k$, if two vertices are in distinct components of $\cf_k^{(m)}$, then they are in distinct components of $\cf_k^{(m+1)}$.
\item For $k$ not a prime power, $b_0\left(\cf_k^{(m)} \right) < b_0\left( \cf_k^{(m+1)}\right)$ for infinitely many $m$.
\end{enumerate}
Together, these evidently imply that $b_0(\cf_k)=\infty$ when $k$ is not a prime power.

The automorphism group of $\cf$ preserves $d_\cf$, and thus $\Aut(\cf) < \Aut(\cf_k)$.  
Because the action of $\PSL(2,\bz)$ on $\cf_k$ is vertex-transitive (see \autoref{auts sec}), it is easy to see that the connected components of $\cf_k$ are all isomorphic. 
In fact, the method of proof of \autoref{exact k} sheds more light on the structure of each component. 
For instance, each component is planar (see \autoref{planarity}), and when $k$ is even each component is an infinite-valence tree (see \autoref{tree}). 
When $k$ is odd, the latter statement is false; in that case $\cf_k$ contains triangles. 
Nonetheless, each vertex of $\cf_k$ is a cut vertex for $k>1$ (see \autoref{cut}).

It is not hard to see that $\Aut(\cf)$ contains $\PSL(2,\bz)$ as a finite-index subgroup. 
In stark contrast, a consequence of the above analysis is the following corollary:

\begin{cor}
\label{aut gp}
For $k>1$ the automorphism group $\Aut(\cf_k)$ is uncountable.
\end{cor}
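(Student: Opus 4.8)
The plan is to embed into $\Aut(\cf_k)$ the group $\mathrm{Sym}(\bn)$ of all permutations of a countably infinite set, which has cardinality $2^{\aleph_0}$. First I would reduce to a single connected component $C$ of $\cf_k$: since no edge of $\cf_k$ joins two distinct components, any automorphism of $C$ extends by the identity on the other components to an automorphism of $\cf_k$, giving an injection $\Aut(C)\hookrightarrow\Aut(\cf_k)$, so it suffices to make $\Aut(C)$ uncountable. Two elementary facts feed into the rest. (a) Every vertex of $\cf_k$ has countably infinite valence: the neighbours of $1/0$ are exactly the $a/k$ with $\gcd(a,k)=1$, and since $\PSL(2,\bz)$ acts on $\cf_k$ vertex-transitively and preserving $d_\cf$ (see \autoref{auts sec}) the same is true at every vertex---and in particular at every vertex of $C$, because all the neighbours of a vertex lie in its own component. (b) $C$ is itself vertex-transitive: a group acting on a graph permutes its components, so any $g\in\PSL(2,\bz)$ taking a vertex of $C$ to another vertex of $C$ must satisfy $gC=C$; hence the setwise stabiliser of $C$ in $\PSL(2,\bz)$ is transitive on $V(C)$, and it contains a parabolic fixing any prescribed vertex (conjugate $z\mapsto z+1$ into place).

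The engine for manufacturing automorphisms is a permutation trick. Suppose that for some vertex $v\in V(C)$ there are induced connected subgraphs $B_0,B_1,\dots\subseteq C$, each containing $v$, pairwise meeting exactly in $\{v\}$, with no edge of $C$ running between $B_i\setminus\{v\}$ and $B_j\setminus\{v\}$ when $i\ne j$, with $\bigcup_i V(B_i)=V(C)$, and with isomorphisms $\psi_i\colon B_0\to B_i$ fixing $v$. Then each $\sigma\in\mathrm{Sym}(\bn)$ determines an automorphism $\Phi_\sigma$ of $C$ by $\Phi_\sigma(v)=v$ and $\Phi_\sigma|_{B_i}=\psi_{\sigma(i)}\circ\psi_i^{-1}$; since every edge of $C$ lies in a single $B_i$ or is incident to $v$, this is a well-defined graph automorphism, and $\sigma\mapsto\Phi_\sigma$ is injective, so $\mathrm{Sym}(\bn)\hookrightarrow\Aut(C)$ as desired. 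When $k$ is even this family is immediate: \autoref{tree} says $C$ is a tree, so being vertex-transitive with every vertex of infinite valence it is the $\aleph_0$-regular tree, and deleting any vertex $v$ and re-attaching $v$ to each resulting component produces the $B_i$, pairwise isomorphic by the homogeneity of the $\aleph_0$-regular tree.

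When $k$ is odd and $k>1$, $C$ contains triangles and is not a tree, but \autoref{cut} still guarantees that every vertex is a cut vertex; the main obstacle is upgrading this to the branch family above. What I would need is that deleting a vertex $v$ from $C$ leaves infinitely many components, and that a parabolic in $\PSL(2,\bz)$ fixing $v$ carries infinitely many of those components onto one another (so that the associated branches $B_i$ are pairwise isomorphic via restrictions of automorphisms in $\PSL(2,\bz)$). I expect this to come straight out of the reducing strategy behind \autoref{exact k}, which already exhibits each component, relative to a basepoint, as a tree-like gluing of finite clusters along vertices with every vertex lying in infinitely many clusters---precisely the structure required. (Should a single parabolic fail to relate enough clusters, one can instead recurse, using that every vertex of every piece is again a cut vertex, to build a Cantor set of independent swaps.) Granting this structural description, the permutation engine applies verbatim and yields the corollary.
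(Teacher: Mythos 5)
Your reduction to a single component, your permutation engine, and your treatment of even $k$ are all sound; identifying each component as the $\aleph_0$-regular tree via \autoref{tree} and the infinite valence of every vertex is a clean route for that case. The genuine gap is the odd case, where the heart of the matter is left unproved. You state what you need---that deleting $v$ from $C$ leaves infinitely many components, infinitely many of which are permuted by a parabolic fixing $v$---and then ``expect'' it to follow from the reducing strategy behind \autoref{exact k}. That strategy (the denominator-reduction in \autoref{connected}) proves connectivity of the fibers of $\phi_k$; it does not exhibit a component as a ``tree-like gluing of finite clusters'' with each vertex in infinitely many clusters, and no such structural description is established in your proposal (nor in the paper). \autoref{cut} alone gives only that $C\setminus\{v\}$ is disconnected, not that it has infinitely many components, let alone infinitely many isomorphic ones; and the parenthetical fallback about recursing on cut vertices is not an argument. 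So for every odd $k>1$ the statement you are ``granting'' is exactly the missing content of the corollary.

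For comparison, the paper closes precisely this gap in \autoref{autos}, uniformly in $k$, so no parity split is needed. Normalize $v=0/1$ and let $L=\phi_k(v)$; the proof of \autoref{cut} shows that inside $\phi_k^{-1}(L)$ no edge joins a vertex with positive value to one with negative value. The parabolic $A=\left(\begin{smallmatrix}1&0\\1&1\end{smallmatrix}\right)$ fixes $v$, and for a neighbor $w_0\sim v$ its orbit under powers of $A$ accumulates to $v$ from both sides; choosing $w=A^{n_1}w_0<0$ and $B=A^{n_2-n_1}$ with $Bw>0$, one checks that for any $m_1\neq m_2$ a further power of $B$ makes $B^{m_1}w$ and $B^{m_2}w$ straddle $v$, so the vertices $B^nw$ lie in pairwise distinct components of $C\setminus\{v\}$, and these components are pairwise isomorphic because powers of $B$ carry one to another. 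Permuting these infinitely many isomorphic branches while fixing everything else embeds $\mathrm{Sym}(\bn)$ into $\Aut(C)$. Note also that your engine, as stated, demands that \emph{all} branches at $v$ be isomorphic and cover $C$; only ``infinitely many isomorphic branches'' is needed, and that weaker form is what the paper's argument delivers and what you should aim for in the odd case.
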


The graph $\cf_k$ is the genus one version of an `exact $k$-curve graph' associated to a closed surface of genus $g$, where edges indicate geometric intersection number equal to $k$. 
In these terms, \autoref{aut gp} states that the automorphism group of the exact $k$-curve graph in genus one is quite different than the (extended) mapping class group of the torus for $k>1$, another low-complexity counterexample in Ivanov's `Meta-Conjecture' (see \cite{Ivanov}). 
However, the automorphism group of the exact $k$-curve graph in higher genus remains unclear, and the methods of this paper seem unlikely to shed light on this question; in particular it seems that the exact $k$-curve graph is connected for $g>1$.

In contrast, it has recently been shown that the automorphism group for the `at-most-$k$-curve graph' is isomorphic to the mapping class group when $g$ is sufficiently large relative to $k$ \cite{ICERM2}. 
The analogous statement for $\cf_{\leqslant k}$, namely that $\Aut(\cf_{\leqslant k}) \cong \Aut(\cf)$, seems within reach.

We turn our attention to $\cf_{\leqslant k}$. 
Let $p(k)$ equal the smallest prime larger than $k$. Agol showed the bounds $\omega(\cf_{\leqslant k}) \le \chi(\cf_{\leqslant k}) \le 1+ p(k)$ for all $k$ (see \cite[Lemma 8.2]{Agol}, cf.~\cite[Theorem 8.6]{GGV}), and he observed that these are equalities when $k$ is either one or two less than a prime.
As a consequence, he also observed that $\omega(\cf_{\leqslant k})$ is asymptotic to $k$ via the Prime Number Theorem \cite{Agol2}. 
We add to the families of $k$ for which Agol's bounds are equalities:

\begin{thm}
\label{up to k}
Suppose that $k$ is a number such that $p(k)=k+3$, and $p(k)\equiv 11 \modu{12}$.
Then we have equality of chromatic and clique numbers $\omega(\cf_{\leqslant k}) = \chi(\cf_{\leqslant k}) = 1+ p(k)$.
\end{thm}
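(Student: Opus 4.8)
The plan is to exhibit, for $k$ with $p := p(k) = k+3 \equiv 11 \pmod{12}$, an explicit clique of size $1 + p$ in $\cf_{\leqslant k}$; since Agol's bound gives $\omega(\cf_{\leqslant k}) \le \chi(\cf_{\leqslant k}) \le 1 + p$, and $\omega \le \chi$ always, producing a clique of that size forces equality throughout. Following the reduction described in the introduction (and Agol), I would work with the map sending a vertex $p_i/q_i$ of $\cf$ to the line it spans in $(\bz/p\bz)^2$. Two vertices with determinant $d_\cf$ satisfy $d_\cf \equiv \det \pmod p$ after reduction, so if I can find $1 + p$ vectors in $(\bz/p\bz)^2$ that are pairwise non-proportional (so they span the full set of $p+1$ lines in $\bF_p^2$) together with an integral lift whose pairwise determinants all have absolute value $\le k = p - 3$, I am done. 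The non-proportionality guarantees the mod-$p$ determinants are nonzero, i.e.\ the genuine determinants are $\not\equiv 0 \pmod p$; the size-$\le k$ condition then has to be arranged by choosing the lifts carefully so that no determinant equals $p$, $2p$, $\dots$ in absolute value, and in particular no determinant is exactly $p-2$, $p-1$, $p$, or more.

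Concretely I would take representatives $\frac{a_i}{1}$ for $a_i = 0, 1, \dots, p-1$ together with $\frac{1}{0}$, which is the standard "Farey clique" realizing all $p+1$ lines; here $d_\cf(a_i/1, a_j/1) = |a_i - a_j| \le p - 1$, which is too big by one or two, and $d_\cf(a_i/1, 1/0) = 1$. So the naive clique has a few bad pairs (those with $|a_i - a_j| \in \{p-1, p-2\}$) that must be repaired by replacing some of the vertices $a_i/1$ with other representatives of the same line, e.g.\ $(a_i + p)/1$, $(a_i - p)/1$, or $a_i/1$ shifted by a unimodular move, or more flexibly $\frac{a_i + m p}{1}$ and even $\frac{a_i}{1} \mapsto \frac{a_i n + c}{n}$ for suitable $n$ dividing into the determinant conditions. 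The determinant between $\frac{a_i}{1}$ and $\frac{b_j + p c}{1+p}$-type representatives opens up more room. The congruence $p \equiv 11 \pmod{12}$, i.e.\ $p \equiv -1 \pmod{3}$ and $p \equiv 3 \pmod 4$, is exactly what should make a consistent choice of lifts possible: I expect one uses that $-1$ is a non-residue mod $p$ (from $p \equiv 3 \pmod 4$) and that $3 \mid p+1$ (from $p \equiv 2 \pmod 3$) to build a symmetric configuration — perhaps the $p+1$ lines organized via a subgroup of $\PSL(2,\bF_p)$ of order divisible by $3$, or via the cross-ratio structure — whose standard integral lift automatically has all determinants avoiding the three forbidden residue classes near a multiple of $p$.

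The main obstacle is precisely the combinatorial bookkeeping of the lifts: one must choose, simultaneously for all $\binom{p+1}{2}$ pairs, integer representatives of each of the $p+1$ lines so that every pairwise determinant lands in $\{1, 2, \dots, p-3\}$ in absolute value. Guaranteeing nonzero mod $p$ is free; guaranteeing $\le p - 3$ rather than merely $\le p - 1$ is the crunch, and this is where the hypothesis on $p \bmod 12$ must be used in an essential and non-obvious way — presumably a clever closed-form family of representatives (likely parametrized so that determinants are values of a quadratic form or a product like $(i - j)(i + j + \text{const})$ modulo which the residues $p-1$, $p-2$ never occur) rather than a case check. I would first pin down this family on small cases ($p = 11, 23, 47, \dots$) to guess the pattern, then prove the determinant bound in general by a direct estimate, and finally invoke Agol's upper bound to conclude $\omega(\cf_{\leqslant k}) = \chi(\cf_{\leqslant k}) = 1 + p(k)$.
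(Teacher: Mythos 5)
There is a genuine gap: your proposal correctly reduces the theorem to producing a clique of size $1+p(k)=k+4$ in $\cf_{\leqslant k}$ (i.e.\ $p+1$ slopes, one on each line of $(\bz/p\bz)^2$, with all pairwise determinants at most $k=p-3$) and correctly cites Agol's upper bound, but that reduction is the easy half. The entire substance of the theorem is the explicit construction of such a clique, and your proposal stops exactly where the work begins: you say the lifts must be chosen by ``a clever closed-form family'' to be guessed from small cases and then verified, which is a plan to find a proof rather than a proof. Your suggested starting point --- repairing the naive configuration $\{0/1,1/1,\ldots,(p-1)/1,1/0\}$ by re-choosing representatives of the same lines mod $p$ --- is not known to work and is not what happens in the paper; the actual extremal configurations involve vertices of many different denominators, not translates of integer slopes.

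For comparison, the paper proves the bound by building a finite subgraph $T_n$ of the trivalent tree $\cg$ dual to the Farey tessellation: starting from a path of length $7n-1$ in a horocycle and attaching edges at depth one or two according to a three-block pattern, one gets $|V_{T_n}|=12n$ vertices whose pairwise determinants are computed via left-right sequences and continued-fraction numerators (\autoref{intersection numbers}, \autoref{non-decreasing int}), with maximum exactly $12n-4$ (\autoref{clique construction2}). Thus $\omega(\cf_{\leqslant k})\ge k+4$ whenever $k\equiv 8\modu{12}$ (\autoref{clique sizes}), and the hypotheses $p(k)=k+3$, $p(k)\equiv 11\modu{12}$ are used only to ensure $k\equiv 8\modu{12}$ and that $k+4=1+p(k)$ matches Agol's bound. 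In particular your guess that the congruence enters through quadratic-residue properties of $p$ (that $-1$ is a non-residue, or $3\mid p+1$) is off the mark: it enters purely arithmetically, as the condition that $k$ lie in the parameter family $k=12n-4$ realized by the $T_n$ construction. Without an actual construction achieving determinant bound $p-3$ on all $\binom{p+1}{2}$ pairs, your argument does not establish the theorem.
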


The known cases in which $\omega(\cf_{\leqslant k}) = 1+p(k)$ have $\omega(\cf_{\leqslant k})=k+r$, for $r=2,3,4$, respectively, each for infinitely many values of $k$. 
We expect there are many other such constructions lurking in this area, and we propose the following problems about the sequence $\{\omega(\cf_{\leqslant k})\}$:

\begin{prob}[Exceptional $k$]
\label{exceptions}
Characterize $k$ for which $\omega(\cf_{\leqslant k}) < 1+p(k)$.
\end{prob}

\begin{prob}[Big Jumps]
\label{big jumps}
Let $r\ge 5$. Is it true that $ \{k: \omega(\cf_{\leqslant k}) \ge k+r\}  $ is nonempty? infinite?
\end{prob}

We have collected experimental data towards both problems (see \autoref{evidence}) by analyzing values of $k$ less than $100$. Intriguingly, for \autoref{exceptions} it seems that many primes that are two less than a composite are such exceptions (though $k=37$ is curiously absent from the exceptional list). 
For \autoref{big jumps}, the data verifies nonemptiness for $r\le 6$.

\subsection*{Acknowledgements}
The first author thanks Ser-Wei Fu and Peter Shalen for conversations that contributed to this project.
All the authors thank Ian Agol and Jeffrey Lagarias for helpful correspondence, and Tarik Aougab for his helpful ideas, enthusiasm, and support.
This material is based upon work supported by the National Science Foundation under Grant No.~DMS-1439786 while the authors were in residence at the Institute for Computational and Experimental Research in Mathematics in Providence, RI, during the \href{https://icerm.brown.edu/summerug/2018/}{\emph{Summer@ICERM 2018: Low Dimensional Topology and Geometry}} program.

\section{notation and background}
\subsection{Notation}
For a graph $G$, we indicate the number of connected components of $G$ by $b_0(G)$ (that is, the \emph{zero-th Betti number} of $G$). We indicate the vertices of $G$ by $V(G)$, and for $a,b\in V(G)$ we indicate the edge relation by $a\sim b$. Let $\omega(G)$ and $\chi(G)$ indicate the clique and chromatic numbers of $G$, respectively.

\subsection{Numerators, denominators and \texorpdfstring{$\mathbb{Z}^2$}{Z2}}
Vertices of $\cf$ are reduced fractions which evidently have numerators and denominators. 
It is often useful in our analysis to assume that denominators are positive; since $p/q=(-p)/(-q)$ nothing is lost in this assumption.
Vertices of $x/y\in\cf$ are also in correspondence with lines $\{(\lambda x,\lambda y):\lambda \in \bz\} \subset \bz^2$. 
Each such line has two primitive elements $(x,y)$ and $(-x,-y)$. We sometimes use $(x,y)$ and $x/y$ interchangeably for vertices of $\cf$, taking care that this identification is $2$-to-$1$ as remarked above.

\subsection{Automorphisms}
\label{auts sec}
The automorphism group $\Aut(\cf)$ admits a homomorphism to $\bz/2\bz$ according to whether it preserves or negates the determinant of a pair. 
Recall that the kernel is isomorphic to $\PSL(2,\bz)$, which acts on $V(\cf)$ via \emph{linear fractional transformations}: 
the action of an element of $\PSL(2,\bz)$ on $x/y \in V(\cf)$ is given by
\[
\left(
\begin{array}{cc}
a & b \\ 
c & d
\end{array}\right) \cdot \frac x y =
\frac {ax + by}{cx+dy}~.
\]
It is not hard to see that $\PSL(2,\bz)$ acts trasitively on $V(\cf)$.

\subsection{Elementary calculations}
We use the following elementary fact (see, e.g.~\cite[Lemma 2.4]{Hatcher} for the proof).

\begin{lem}
\label{elt num th}
Suppose that $a,b,n\in\bz$ with $\gcd(a,b)=1$, and that $(x_0,y_0)$ is a solution to $ax+by=n$. Then the set of integral solutions to this equation is given by $\{(x_0,y_0)+m(a,b):m\in\bz\}$.
\end{lem}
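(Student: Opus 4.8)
The plan is the standard reduction for a linear Diophantine equation: difference two solutions so as to land in the associated homogeneous equation, and then solve the homogeneous equation directly from $\gcd(a,b)=1$. Set $S=\{(x_0,y_0)+m(a,b):m\in\bz\}$ and let $T\subseteq\bz^2$ denote the full set of integral solutions; I would establish $S=T$ via the two inclusions, with essentially all the content sitting in $T\subseteq S$.

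The inclusion $S\subseteq T$ is a one-line substitution, using that $(x_0,y_0)\in T$ and that $(a,b)$ solves the homogeneous equation obtained by replacing the right-hand side with $0$. For $T\subseteq S$, take $(x,y)\in T$ and subtract the equation satisfied by $(x_0,y_0)$ from the one satisfied by $(x,y)$; the difference $(u,v):=(x-x_0,\,y-y_0)$ then solves the homogeneous equation, so it remains only to show that the homogeneous solution set is exactly $\bz\cdot(a,b)$. This is the sole place the hypothesis $\gcd(a,b)=1$ is used: the homogeneous equation equates a multiple of $u$ with a multiple of $v$, the two coefficients being $a$ and $b$ up to sign, so Euclid's lemma forces one coordinate of $(u,v)$ to be an integer multiple $m$ of one of $a,b$; cancelling that coefficient from the relation then pins the other coordinate to the matching multiple, and (with signs read off to match the convention of the statement) this says precisely $(u,v)=m(a,b)$. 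Hence $(x,y)=(x_0,y_0)+m(a,b)\in S$. The cancellation is legitimate because $\gcd(a,b)=1$ excludes $a=b=0$; and when one of $a,b$ vanishes the other equals $\pm1$, making the claim transparent in that case.

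I do not expect a genuine obstacle: the statement is elementary, which is precisely why it is invoked here with a citation rather than reproved in full. The only substantive step is the single appeal to coprimality in the homogeneous case; everything else is substitution, together with a moment's care for the two degenerate cases $a=0$, $b=0$ and for the sign bookkeeping that matches the direction vector produced by the argument with the $(a,b)$ recorded in the statement.
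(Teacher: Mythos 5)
The paper never proves this lemma (it only cites Hatcher), so there is no in-house argument to compare against; your plan---difference two solutions to land in the homogeneous equation, then apply Euclid's lemma---is the standard route and is the right idea. The failure is in the last step, and it is not a matter of sign bookkeeping. For the equation $ax+by=0$ with $\gcd(a,b)=1$, your own argument gives $a\mid v$ and, after cancelling, $b\mid u$ with matched multiples, i.e.\ $(u,v)=m(b,-a)$; it does \emph{not} give $(u,v)=m(a,b)$, and no reading of signs can make it do so, since $a\cdot(ma)+b\cdot(mb)=m(a^2+b^2)$ vanishes only for $m=0$. The same issue sinks your easy inclusion $S\subseteq T$: $(a,b)$ is not a solution of the homogeneous equation. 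And the degenerate case you call transparent is exactly where the problem is visible: for $a=0$, $b=1$ the equation is $y=n$, whose solutions are $(x_0,y_0)+m(1,0)$, while the displayed statement asserts $(x_0,y_0)+m(0,1)$.

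In other words, the lemma as literally printed is false (a typo: the displacement vector should be $(b,-a)$, equivalently the equation should read $ay-bx=n$), and your proposal inherits the error rather than detecting and correcting it. The form actually used in the paper is consistent with the corrected version: in \autoref{neighbors in Fk} the equation is $ay-bx=\pm k$ in the unknowns $(x,y)$, whose coefficient vector is $(-b,a)$ and whose homogeneous solutions are indeed $\bz\cdot(a,b)$, so the neighbors have the form $(x_0+am)/(y_0+bm)$. A correct write-up should either prove the statement with direction vector $(b,-a)$ for the equation $ax+by=n$, or restate the hypothesis as $ay-bx=n$ and keep the direction $(a,b)$; with that fix, the rest of your outline (the substitution for one inclusion, Euclid's lemma plus cancellation for the other, and the separate check when $a$ or $b$ vanishes) goes through verbatim.
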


This has an easy consequence in our setting:

\begin{lem}
\label{neighbors in Fk}
Suppose that $a/b \sim x/y$ for vertices $a/b,x/y\in V(\cf_k)$. Then the neighbors of $a/b$ in $\cf_k$ all have the form $\displaystyle \frac{x +am}{y+bm}$ for $m\in \bz$.
\end{lem}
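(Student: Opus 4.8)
The plan is to reduce the statement to a linear Diophantine problem and then invoke \autoref{elt num th}. Working with the primitive integer vectors rather than the fractions, write $v = (a,b)$ and $w = (x,y)$ for the chosen primitive representatives, and suppose $d_\cf(v,w) = k$, i.e.\ $|\det(v\,|\,w)| = k$. A vertex $u = (s,t) \in V(\cf)$ is a neighbor of $a/b$ in $\cf_k$ precisely when $|\det(v\,|\,u)| = k$, that is, when $|bs - at| = k$. So the neighbors of $a/b$ are exactly the primitive vectors $(s,t)$ satisfying $bs - at = \pm k$.

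First I would observe that since $a/b$ is a reduced fraction we have $\gcd(a,b) = 1$, so by \autoref{elt num th} the integer solutions of $bs - at = k$ form a single coset $\{(s_0,t_0) + m(a,b) : m \in \bz\}$, and similarly the solutions of $bs - at = -k$ form another such coset. Next I would note that $w = (x,y)$ itself is a solution of one of these two equations (after possibly replacing $w$ by $-w$, which represents the same vertex), say $bx - ay = \pm k$; hence every solution of that equation has the form $(x + am, y + bm)$. For the other sign, $bs - at = \mp k$, the solution set is $(-x + am', -y + bm') = -\big((x - am') + \ldots\big)$; reindexing, these vectors are exactly the negatives of vectors of the form $(x + am, y + bm)$, and since $(s,t)$ and $-(s,t)$ name the same vertex, the two cosets describe the same set of \emph{vertices}. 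Finally, I would check that each such candidate $(x + am, y + bm)$ is automatically primitive: its determinant against $(a,b)$ is $\pm k \neq 0$, so it is not a multiple of $(a,b)$, and any common divisor of its coordinates would divide $bx - ay = \pm k$ and also divide... — more directly, a common divisor $e$ of $x+am$ and $y+bm$ divides $b(x+am) - a(y+bm) = bx - ay = \pm k$, but that alone is not enough, so instead I would argue that $(x+am, y+bm) = w + m v$ with $w$ primitive, and a common factor $e \mid w + mv$ together with... — cleanest is: the vertex $\frac{x+am}{y+bm}$ is by definition the reduced form of that fraction, so primitivity is not even required for the statement as phrased. Thus the neighbors of $a/b$ in $\cf_k$ are precisely the vertices $\frac{x+am}{y+bm}$, $m \in \bz$.

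The only genuinely delicate point is the bookkeeping around the two sign choices and the $2$-to-$1$ identification of primitive vectors with vertices: one must be careful that passing from ``$bx - ay = k$'' to ``$bx - ay = -k$'' does not produce new vertices, only the relabeling $(s,t) \mapsto -(s,t)$. Once that is handled, everything else is a direct application of \autoref{elt num th}. I would also remark, as a sanity check, that the hypothesis $a/b \sim x/y$ is used only to produce \emph{some} neighbor $x/y$ to base the parametrization on; the conclusion then says every neighbor is obtained from that one by the translation $m \mapsto m+1$ along the ``direction'' $(a,b)$, which geometrically is exactly the statement that the neighbors of a fixed vertex form a single orbit under the parabolic subgroup of $\PSL(2,\bz)$ fixing $a/b$.
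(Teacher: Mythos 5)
Your argument is correct and is essentially the paper's own proof: the paper simply observes that $(x,y)$ and $(-x,-y)$ solve $ay-bx=k$ and $ay-bx=-k$ and invokes \autoref{elt num th}, leaving implicit the sign and $2$-to-$1$ bookkeeping that you spell out. One small caution: the word ``precisely'' in your conclusion overstates the lemma, since the reverse inclusion can fail (for composite $k$ the vector $(x+am,y+bm)$ need not be primitive, and after reduction its pairing with $a/b$ drops below $k$), but the statement only asserts the containment you actually prove.
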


\begin{proof}
The points $(x,y)$ and $(-x,-y)$ are solutions to the equations $ay-bx=k$ and $ay-bx=-k$, and the lemma follows from \autoref{elt num th}.
\end{proof}

We will also make use of the following exercise:

\begin{lem}
\label{clique number k}
The clique number of $\cf_k$ is two if $k$ is even and three if $k$ is odd.
\end{lem}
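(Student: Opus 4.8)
The plan is to translate everything into statements about primitive integer vectors: for a vertex $V$ write $\vec v\in\bz^2$ for a choice of primitive representative (well defined up to sign, as in Section~2), so that $V\sim V'$ in $\cf_k$ exactly when $|\det(\vec v,\vec v')|=k$. The engine of the whole argument is the linear identity
\[
\det(\vec a,\vec x)\,\vec z+\det(\vec x,\vec z)\,\vec a+\det(\vec z,\vec a)\,\vec x=0,\qquad \vec a,\vec x,\vec z\in\bz^2,
\]
valid because three plane vectors are linearly dependent (one can also read the same information off of \autoref{neighbors in Fk}). From it I first extract the structure of a triangle: if $A,X,Z$ are pairwise adjacent then $\det(\vec a,\vec x)=\pm k\neq 0$, so $\{\vec a,\vec x\}$ is a $\bq$-basis, and the identity expresses $\vec z$ as a combination of $\vec a$ and $\vec x$ with coefficients $\pm\det(\vec x,\vec z)/\det(\vec a,\vec x)$ and $\pm\det(\vec z,\vec a)/\det(\vec a,\vec x)$, each of absolute value $1$. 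Thus $\vec z=\pm\vec a\pm\vec x$, and after negating some of the chosen representatives (which changes nothing) we may assume $\vec z=\vec a+\vec x$. In particular a triangle on the edge $A\sim X$ exists only if $\vec a+\vec x$, or $\vec a-\vec x$, is primitive.

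The lower bounds are immediate: $1/0\sim 1/k$ shows $\omega(\cf_k)\ge 2$ for all $k$, and when $k$ is odd the vectors $(1,0),(1,k),(2,k)$ are pairwise at determinant $\pm k$ with $(2,k)$ primitive, so $\{1/0,\ 1/k,\ 2/k\}$ is a triangle and $\omega(\cf_k)\ge 3$.

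For even $k$ I would rule out triangles using the structural fact above together with a parity check. If $\det(\vec a,\vec x)=\pm k$ with $k$ even, reduce mod $2$: since $\vec a$ and $\vec x$ are primitive they are nonzero mod $2$, so modulo $2$ each is one of $(1,0),(0,1),(1,1)$, and a three-line case check shows $\det(\vec a,\vec x)\equiv 0\bmod 2$ forces $\vec a\equiv\vec x\bmod 2$. Then $\vec a\pm\vec x\equiv(0,0)\bmod 2$ is imprimitive, so by the previous paragraph no triangle sits on $A\sim X$; as the edge was arbitrary, $\cf_k$ is triangle-free and $\omega(\cf_k)=2$.

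For odd $k$ it remains to show $\cf_k$ has no $K_4$. Given a $4$-clique $\{A,X,Z,W\}$, apply the structural fact to $\{A,X,Z\}$ to arrange $\vec z=\vec a+\vec x$; applying it to $\{A,X,W\}$ (with $\vec a,\vec x$ now fixed) gives $\vec w\in\{\pm(\vec a+\vec x),\pm(\vec a-\vec x)\}$, and since $W\neq Z$ we must have $\vec w=\pm(\vec a-\vec x)$. But then
\[
|\det(\vec z,\vec w)|=|\det(\vec a+\vec x,\vec a-\vec x)|=2\,|\det(\vec a,\vec x)|=2k\neq k,
\]
contradicting $Z\sim W$. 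Hence $\omega(\cf_k)=3$ when $k$ is odd. I expect the only delicate point is careful bookkeeping of the sign ambiguity in the vertex--vector dictionary, together with the observation --- which is exactly what makes the even and odd cases diverge --- that $\vec a+\vec x$ need not be primitive even when $\vec a$ and $\vec x$ are.
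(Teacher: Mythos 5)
Your proof is correct, and in substance it is the same argument as the paper's, just organized invariantly rather than via normalization. The paper first uses transitivity of $\PSL(2,\bz)$ to move one vertex of a maximum clique to $1/0$; the remaining clique vertices are then of the form $x/k$, and everything reduces to the computation $d_\cf(x/k,x'/k)=k|x-x'|$: adjacency of two common neighbors forces $|x-x'|=1$, three distinct integers cannot pairwise differ by $1$ (this is exactly your $|\det(\vec a+\vec x,\vec a-\vec x)|=2k$ exclusion of a $K_4$), and for $k$ even the numerators $x,x'$ are odd since the fractions are reduced, so $|x-x'|\ge 2$ (this is exactly your mod-$2$ primitivity obstruction). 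You instead derive the triangle classification $\vec z=\pm\vec a\pm\vec x$ from the linear-dependence identity for three plane vectors, which costs a bit more sign and primitivity bookkeeping but buys a coordinate-free statement about triangles over an arbitrary edge and avoids any appeal to vertex-transitivity; the lower-bound examples ($1/0\sim 1/k$, and $\{1/0,1/k,2/k\}$ for odd $k$) are identical to the paper's. Both routes are complete; the paper's is shorter only because the normalization renders the determinant identity unnecessary.
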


\begin{proof}
Apply an element of $\PSL(2,\bz)$ that takes a vertex in a maximum clique to $1/0$. Now the neighbors of $1/0$ have the form $x/k$, and any two such vertices $x/k$ and $x'/k$ have 
\[
d_\cf(x/k,x'/k) = k|x-x'|~.
\] 
First, observe that at most two such vertices can be pairwise connected, since $x<x'<x''$ implies that $|x-x''|\ge 2$. 
Thus $\omega (\cf_k)\le 3$. 
If $k$ is even, then as the fractions are reduced we have $|x-x'|\ge 2$, and it is clear that $\omega(\cf_k)=2$. 
When $k$ is odd $\{1/0,1/k,2/k\}$ form a clique of size three.
\end{proof}

\bigskip

\section{From vertices of \texorpdfstring{$\cf$}{F} to the projective line over \texorpdfstring{$\bz/r\bz$}{Z/rZ}}
\label{lines}
We start by recalling the projective line over $\bz/r\bz$ (see e.g.~\cite{Herzer} for background).
An element $(a,b)\in\left(\bz/r\bz\right)^2$ is called \emph{admissible} if $(ra,rb)=0$ implies that $r=0$. 
Denote the admissible elements by $\left(\bz/r\bz\right)^2_{\mathrm{adm}}$, and
let 
\[
\cl_r := \left(\bz/r\bz\right)^2_{\mathrm{adm}} / \sim , \ \ \text{ where } v \sim \lambda v \text{ for } \lambda\in \left(\bz/r\bz\right)^*~.
\]

\begin{rem}
When $r=p$ is prime, $\bz/r\bz$ is a field, and we have the more concise description that $\cl_p$ is the projectivization of the vector space $\left(\bz/p\bz\right)^2$.
\end{rem}

The vertices of $\cf$ are in correspondence with relatively prime pairs $(p,q)\in\bz^2$, modulo the equivalence relation $(p,q)\equiv(-p,-q)$. 
Because $-1$ is a unit mod $r$ the following is well-defined:
\begin{definition}
\label{reducing map}
The reduction $\bz^2 \to \left( \bz/r\bz\right)^2$ induces the map
\[
\phi_r : V(\cf) \to \cl_r~. 
\]
\end{definition}

We will make use of the transitive action of $\PSL(2,\bz)$ on $V(\cf)$:

\begin{lem}
\label{transitive lines}
The action $\PSL(2,\bz) \curvearrowright V(\cf)$ descends to a transitive action on $\cl_r$.
\end{lem}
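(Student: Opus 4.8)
The plan is to verify two things and then combine them: first, that the $\PSL(2,\bz)$-action on $V(\cf)$ is compatible with $\phi_r$ (which is what lets it descend to $\cl_r$), and second, that $\phi_r$ is surjective; transitivity on $\cl_r$ then follows formally from transitivity on $V(\cf)$.

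I would begin by recording the compatibility. The action of a matrix $M\in\mathrm{SL}(2,\bz)$ on $x/y\in V(\cf)$ is just multiplication on the column $(x,y)^T$, and reduction modulo $r$ is a ring homomorphism, so $\phi_r(M\cdot v)=\overline{M}\cdot\phi_r(v)$, where $\overline{M}\in\mathrm{SL}(2,\bz/r\bz)$ acts linearly on $\left(\bz/r\bz\right)^2$. Since $\overline{M}$ is invertible it carries admissible elements to admissible elements, and being linear it commutes with scaling by units in $\bz/r\bz$; hence it induces a well-defined bijection of $\cl_r$. Moreover $-I$ acts as the unit $-1$, so it acts trivially on $\cl_r$, and the $\mathrm{SL}(2,\bz)$-action on $\cl_r$ factors through $\PSL(2,\bz)$ and intertwines $\phi_r$. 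In particular the action depends only on $\phi_r$-classes, which is exactly the statement that it descends along $\phi_r$.

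Next I would establish surjectivity of $\phi_r$. Unwinding the definition, $(a,b)\in\left(\bz/r\bz\right)^2$ is admissible precisely when $\gcd(\tilde a,\tilde b,r)=1$ for integer lifts $\tilde a,\tilde b$; by B\'ezout there are $\alpha,\beta\in\bz$ with $\alpha\tilde a+\beta\tilde b\equiv 1\modu{r}$, so $\left(\begin{smallmatrix} a & -\beta\\ b & \alpha\end{smallmatrix}\right)\in\mathrm{SL}(2,\bz/r\bz)$. Using the standard surjectivity of the reduction map $\mathrm{SL}(2,\bz)\to\mathrm{SL}(2,\bz/r\bz)$, lift this matrix to some $\wt{M}\in\mathrm{SL}(2,\bz)$; its first column is then a primitive pair $(p,q)$ reducing to $(a,b)$, so $\phi_r(p/q)=[(a,b)]$. (If one prefers to avoid quoting surjectivity of the reduction, one can instead produce $(p,q)$ by hand: fix the lift $\tilde a$ and set $q=\tilde b+tr$, choosing $t$ by the Chinese Remainder Theorem to avoid, for each of the finitely many primes dividing $\tilde a$, the single bad residue class of $t$ that would make that prime divide $\gcd(\tilde a,q)$ — the case $\tilde a=0$ being handled directly by $(p,q)=(r,\tilde b)$, which is primitive since then $\gcd(\tilde b,r)=1$.)

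Finally, transitivity is bookkeeping: given $[v],[w]\in\cl_r$, pick $\tilde v,\tilde w\in V(\cf)$ with $\phi_r(\tilde v)=[v]$ and $\phi_r(\tilde w)=[w]$ by surjectivity, choose $g\in\PSL(2,\bz)$ with $g\cdot\tilde v=\tilde w$ by the transitivity of $\PSL(2,\bz)$ on $V(\cf)$ recalled above, and apply the compatibility relation: $g\cdot[v]=g\cdot\phi_r(\tilde v)=\phi_r(g\cdot\tilde v)=\phi_r(\tilde w)=[w]$. The one place with genuine content is the surjectivity of $\phi_r$ — the elementary but not entirely trivial input that an admissible vector modulo $r$ lifts to a primitive integer vector — while the compatibility and the final deduction are purely formal.
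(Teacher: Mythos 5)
Your proposal is correct, and its overall skeleton matches the paper's: check that the action descends, transfer transitivity from $V(\cf)$ to the image of $\phi_r$, and then show $\phi_r(V(\cf))=\cl_r$. The one step where you genuinely diverge is the surjectivity of $\phi_r$. The paper handles it by staying inside the projective class: take integer representatives of an admissible pair $(x,y)$, factor out $\lambda=\gcd(x,y)$, note that admissibility forces $\gcd(\lambda,r)=1$ so $\lambda$ is a unit mod $r$, and hence the coprime pair $(x',y')$ with $(x,y)=\lambda(x',y')$ is already a vertex of $\cf$ mapping to the same line --- a two-line argument needing nothing beyond the definition. You instead prove the stronger statement that every admissible pair is hit \emph{on the nose} (not just up to unit scaling) by a primitive integer vector, either by completing $(a,b)$ to a matrix in $\mathrm{SL}(2,\bz/r\bz)$ and quoting surjectivity of the reduction $\mathrm{SL}(2,\bz)\to\mathrm{SL}(2,\bz/r\bz)$, or by your Chinese Remainder Theorem construction. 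Both routes are valid: the paper's is more economical and fully self-contained, while yours (in the $\mathrm{SL}_2$ form) imports a standard but nontrivial lifting theorem; your CRT variant restores elementarity at the cost of some case analysis ($\tilde a=0$), and it does yield the sharper exact-lifting fact, which is more than the lemma requires. Your compatibility discussion (the action of $\overline{M}$ preserving admissibility and commuting with unit scaling, $-I$ acting trivially) usefully spells out what the paper dismisses as ``easy to see.''
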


\begin{proof}
It is easy to see that the action in the quotient is well-defined.
Transitivity of the action on $V(\cf)$ immediately implies transitivity on the image $\phi_r(V(\cf))\subset \cl_r$. To finish, the latter is equal to $\cl_r$:
Choose $(x,y) \in \left(\bz/r\bz\right)^2_{\mathrm{adm}}$ and let $(x,y)=\lambda(x',y')$ where $\lambda=\gcd(x,y)$. Note that admissibility implies that $\gcd(\lambda,r)=1$, so $\lambda\in\left(\bz/r\bz\right)^*$, and $(x,y) \sim (x',y')$. Now $\gcd(x',y')=1$, so $\phi_r(x'/y')$ is the line through $(x,y)$ as claimed.
\end{proof}

\bigskip

\section{connectivity of \texorpdfstring{$\cf_k$}{Fk}}
Choosing $r=k$, we find that $\phi_k$ is constant on components of $\cf_k$:

\begin{lem}
\label{constant}
If $v,w\in V(\cf_k)$ are in the same component of $\cf_k$ then $\phi_k(v) = \phi_k(w)$.
\end{lem}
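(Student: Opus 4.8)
The plan is to reduce to a statement about a single edge: since "being in the same component" is generated by the edge relation, it suffices to show that if $v \sim w$ in $\cf_k$ then $\phi_k(v) = \phi_k(w)$. Indeed, if this is established, then for any path $v = v_0 \sim v_1 \sim \cdots \sim v_n = w$ we get $\phi_k(v_0) = \phi_k(v_1) = \cdots = \phi_k(v_n)$ by induction, which is the claim.

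So first I would take $v = a/b$ and $w = x/y$ with $a/b \sim x/y$, meaning $d_\cf(a/b, x/y) = |ay - bx| = k$. Working with the primitive representatives $(a,b), (x,y) \in \bz^2$, the condition $ay - bx = \pm k$ reduces mod $k$ to $ay \equiv bx \modu{k}$, i.e.\ $ay - bx \equiv 0 \modu k$. The second step is to translate this into the statement that $(a,b)$ and $(x,y)$ represent the same element of $\cl_k$. The cross-product vanishing mod $k$ says precisely that the $2\times 2$ matrix with columns $(a,b)$ and $(x,y)$ has determinant $0$ in $\bz/k\bz$; I want to conclude from this that one column is a unit multiple of the other in $(\bz/k\bz)^2$. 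This is where I should invoke, essentially, the argument already used in \autoref{transitive lines}: since $(a,b)$ is the reduction of a primitive integer vector, it is admissible, and (after possibly swapping coordinates) one of $a, b$ is a unit mod $k$ — wait, that is not automatic, so more care is needed.

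The cleanest route for the key step: let $\lambda = \gcd(a, b, k)$ — but $\gcd(a,b) = 1$, so in fact the pair $(a,b)$ already generates $\bz^2$, hence its reduction generates $(\bz/k\bz)^2$ as a $\bz/k\bz$-module together with one more generator; more to the point, I can find integers $s,t$ with $as + bt \equiv 1 \modu k$ (since $\gcd(a,b)=1$ implies $\gcd(a,b)$ is a unit mod $k$, trivially $1$). Then from $ay \equiv bx \modu k$ I compute $x \equiv x(as+bt) = s(ax) + t(bx) \equiv s(ax) + t(ay) = a(sx+ty) \modu k$ and similarly $y \equiv b(sx+ty) \modu k$, so $(x,y) \equiv \mu (a,b) \modu k$ where $\mu = sx + ty$. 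It remains to check $\mu$ is a unit mod $k$: since $(x,y)$ is admissible (being the reduction of a primitive vector), $\gcd(\mu) $ relative to $k$ must be $1$ — if a prime $p \mid k$ divided $\mu$, then $p$ would divide both $x$ and $y$, contradicting admissibility (indeed $k \cdot \mu^{-1}\!\cdot$ hmm, directly: $(x,y) = \mu(a,b)$ in $(\bz/k\bz)^2$ with $p \mid \mu$ and $p \mid k$ forces $p \mid x, p\mid y$ but then $(k/p)\cdot(x,y) = (k/p)\mu(a,b) = 0$ since $p \mid \mu$... this needs the admissibility/coprimality of $(x,y)$ in $\bz$, which holds). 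Hence $\mu \in (\bz/k\bz)^*$, so $(x,y) \sim (a,b)$ in $\cl_k$, i.e.\ $\phi_k(w) = \phi_k(v)$.

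I expect the main obstacle to be the unit-multiple step: passing from "determinant $\equiv 0 \modu k$" to "$(x,y)$ is a unit multiple of $(a,b)$ in $\cl_k$" is not a statement about a field when $k$ is composite, so it cannot be quoted from linear algebra; it must be extracted from the coprimality of the integer representatives via the B\'ezout relation $as + bt \equiv 1$, exactly as in the proof of \autoref{transitive lines}. Everything else — the reduction to a single edge, the mod-$k$ reduction of $|ay-bx| = k$ — is routine.
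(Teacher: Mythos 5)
Your proof is correct, and it takes a somewhat different route from the paper's. The paper also reduces to a single edge, but then applies an element of $\PSL(2,\bz)$ to normalize $v$ to $0/1$ (implicitly using the equivariance of $\phi_k$ established in \autoref{transitive lines}), so that the edge condition forces $w=a/b$ with $k$ dividing $a$ and the mod-$k$ reduction gives $\phi_k(w)=[(0,1)]=\phi_k(v)$ almost immediately (with the same small unit check on $b$ that you make explicit for $\mu$). You avoid the group action entirely: from $ay\equiv bx \modu{k}$ and a B\'ezout relation $as+bt=1$ you exhibit $\mu=sx+ty$ with $(x,y)\equiv\mu(a,b) \modu{k}$, and primitivity of $(x,y)$ forces $\mu\in\left(\bz/k\bz\right)^*$. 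This is more self-contained, and it actually proves the stronger statement that $k \mid d_\cf(v,w)$ already implies $\phi_k(v)=\phi_k(w)$, which is precisely the converse of the divisibility used in the proof of \autoref{coloring}; the paper's normalization buys brevity instead. The only blemish is the garbled aside in your unit-check (the attempted computation with $(k/p)\cdot(x,y)$), but it is harmless: the clean argument you also state — a prime $p$ dividing both $k$ and $\mu$ would divide $x$ and $y$, contradicting $\gcd(x,y)=1$ — is all that is needed.
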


\begin{proof}
Suppose that $v\sim w$, and apply an element of $\PSL(2,\bz)$ that takes $v$ to the vertex $0/1$ and $w$ to $a/b$. Now since $v\sim w$ we have $|a|=k$. The reduction of this equation mod $k$ implies that $\phi_r(w) = [(0,1)]= \phi_r(v)$. The result follows easily.
\end{proof}

It is remarkable that the preimages of $\phi_k$ are connected when $k=p^\ell$. 

\begin{prop}
\label{connected}
When $k=p^\ell$, the subgraph induced on $\phi_k^{-1}(L)$ is connected for all $L\in\cl_k$.
\end{prop}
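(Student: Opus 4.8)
The plan is to reduce to a single line and then run an explicit descent. By \autoref{transitive lines} the group $\PSL(2,\bz)$ acts transitively on $\cl_k$, and since this action comes from an action on $V(\cf)$ by graph automorphisms that preserve $\phi_k$, it suffices to treat the line $L_0 := \phi_k(1/0)$; its preimage is $\phi_k^{-1}(L_0) = \{x/y : k\mid y\}$ (the condition $\gcd(x,y)=1$ forces $x$ to be a unit mod $k$). Fix $1/0$ as a basepoint. I will show by strong induction on the denominator $y\ge 0$ that every $v=x/y\in\phi_k^{-1}(L_0)$ is joined to $1/0$ by a path in $\cf_k$; since any such path automatically stays inside $\phi_k^{-1}(L_0)$ by \autoref{constant}, this yields connectivity of the induced subgraph.

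The cases $y=0$ (where $v=1/0$) and $y=k$ (where $d_\cf(x/k,1/0)=k$, so $v\sim 1/0$) are immediate. So let $v=x/y$ with $y=ky'$ and $y'\ge 2$, and note $\gcd(x,y')=1$. Writing a general neighbor of $v$ as $a/b$ with $|xb-ya|=k$, one checks that $k\mid b$ is forced (as $\gcd(x,k)=1$), so $b=kc$ with $xc-y'a=\pm 1$; moreover $\gcd(a,c)=1$ automatically, so $a/b$ is a bona fide reduced-fraction vertex exactly when $\gcd(a,k)=1$ (this structure is also visible from \autoref{neighbors in Fk}). Letting $c_1,c_2\in\{1,\dots,y'-1\}$ be the representatives of $x^{-1}$ and $-x^{-1}$ mod $y'$ produces two candidate ``reducing'' neighbors $w_1=a_1/(kc_1)$ and $w_2=a_2/(kc_2)$, each with denominator $kc_i<ky'=y$, and one has $c_1+c_2=y'$.

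It remains to guarantee that at least one $w_i$ really is a vertex of $\cf_k$, i.e.\ that $\gcd(a_i,k)=1$, and this is the only place the hypothesis $k=p^\ell$ is used. Adding the determinant relations $xc_1-y'a_1=1$ and $xc_2-y'a_2=-1$ and using $c_1+c_2=y'$ yields the identity $a_1+a_2=x$. Since $k=p^\ell$ and $\gcd(x,k)=1$, the prime $p$ divides at most one of $a_1,a_2$; for the index $i$ with $p\nmid a_i$ we get $\gcd(a_i,k)=1$ (so in particular $a_i\ne 0$), and then $w_i$ is a genuine vertex of $\cf_k$ adjacent to $v$ with strictly smaller denominator. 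Applying the inductive hypothesis to $w_i$ and prepending the edge $v\sim w_i$ completes the induction, hence the proof. I expect the identity $a_1+a_2=x$ together with $p\nmid x$ to be the crux: it is exactly what fails when $k$ carries two distinct prime factors $p,p'$, where one can arrange $p\mid a_1$ and $p'\mid a_2$ so that no reduction is available — consistent with $\cf_k$ being disconnected in that case. The remaining points — exhibiting one explicit neighbor of $v$ to feed into \autoref{neighbors in Fk}, verifying $\gcd(a,c)=1$, and confirming the $c_i$ lie in the claimed range — are routine.
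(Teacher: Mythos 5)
Your proof is correct and follows essentially the same strategy as the paper's: use transitivity of $\PSL(2,\bz)$ on $\cl_k$ to reduce to a single line, then descend on the denominator by producing two candidate neighbors with strictly smaller denominator and using the prime-power hypothesis (via a mod-$p$ pigeonhole) to guarantee that at least one is a genuine reduced-fraction vertex. The only differences are cosmetic: you normalize to the line through $(1,0)$ with basepoint $1/0$ and rule out simultaneous failure via the identity $a_1+a_2=x$, while the paper normalizes to the line through $(1,1)$ with basepoint $1/1$ and argues instead from the division algorithm that the two candidate parameters $n_1,n_2$ are consecutive integers.
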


\begin{proof}
By \autoref{transitive lines} we may assume that $L$ is the line through $(1,1)$. Choose as a basepoint $v=1/1$, and let $a/b$ be another point in $\phi_k^{-1}(L)$.
Observe first that $b\ne 0$: because $\phi_r(a,b)=L=\phi_r(1,1)$, we find that $b$ is a unit mod $r$, so by \autoref{constant} $a/b$ and $1/0$ are in distinct components of $\cf_k$.
Thus we may assume that $b>0$.
We compute a path from $a/b$ to $v$ in two steps: 
We treat first the case where $b=1$, and when $b>1$ we show a path exists from $a/b$ to $c/1$ for some integer $c$.

If $b=1$, then $a$ must be congruent to $1 \modu {p^\ell}$ since $a/b\in \phi_k^{-1}(L)$, so let $a=mp^\ell+1$ for some integer $m$. 
One can easily check that the vertices $(mp^\ell+1)/1$ and $((m-1)p^\ell+1)/1$ are incident, which gives us a path from $a/b$ to $1/1$ by adding or subtracting multiples of $p^\ell$.

Now suppose that $b>1$. 
We will show that there is another vertex of $\phi^{-1}(L)$, incident to $a/b$, and with strictly smaller denominator. 
Repeating this reduction as necessary, we terminate with a path from $a/b$ to a vertex with denominator $1$ as claimed.

Let $x_0 / y_0$ be a neighbor of $ a / b$ in $\cf_k$. 
By \autoref{neighbors in Fk} 
the neighbors of $a/b$ are given by $(x_{0}+am)/(y_{0}+bm)$ for $m \in \mathbb{Z}$.
Observe that if $b$ divides $y_0$ then the equation $d_\cf(a/b,x_0/y_0)=k$ would imply $b$ is a zero divisor in $\mathbb{Z}/k\mathbb{Z}$.
However, as observed above the coordinate $b$ is necessarily a unit.
Therefore $b$ does not divide $y_0$, and the division algorithm provides two values $y=y_1$ and $y=y_2$ such that $bn + y = y_{0}$ and $0 < |y| < b$.  
Let the corresponding values of $n$ be $n_1$ and $n_2$.

We now have two candidates for vertices of $\phi_k^{-1}(L)$ incident to $a/b$ with smaller denominator; it remains to check that at least one of them is a vertex in $\cf_k$. 
In particular, we need to check that $\gcd(x_0-an_i,y_0-bn_i)=1$ for some $i$. 
If $\gcd(x_0-an_i,y_0-bn_i) \ne 1$ then the common divisor is divisible by $p$, so it suffices to show that $x_0-an_i$ and $y_0-bn_i$ are not both divisible by $p$. Note that the two values $n_1$ and $n_2$ are subsequent integers. Therefore, if $(x_0-an_i,y_0-bn_i)\equiv (0,0) \modu p$ for $i=1,2$, then $(a,b)\equiv(0,0) \modu p$, a contradiction. 
Thus one of these values of $n_i$ produces a vertex incident to $a/b$ with strictly smaller denominator.
\end{proof}

\subsection{Calculating \texorpdfstring{$b_0(\cf_k)$}{b0(Fk)} when \texorpdfstring{$k=p^\ell$}{k=p to the l}}
\label{connected prime power}

\begin{lem}
\label{counting lines}
When $k=p^\ell$, we have $|\cl_k| = p^{\ell-1}(p+1)$.
\end{lem}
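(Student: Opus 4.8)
The plan is to count the admissible elements of $\left(\bz/k\bz\right)^2$ when $k=p^\ell$, and then divide by the order of the group $\left(\bz/k\bz\right)^*$ that acts on them, after verifying that this action is free.

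First I would record the concrete meaning of admissibility in this setting: as used implicitly in the proof of \autoref{transitive lines}, an element $(a,b)\in\left(\bz/p^\ell\bz\right)^2$ is admissible exactly when $\gcd(a,b,p^\ell)=1$, which for a prime power means simply that $p$ does not divide both $a$ and $b$. The pairs $(a,b)$ that fail this are exactly those with $p\mid a$ and $p\mid b$, of which there are $p^{\ell-1}\cdot p^{\ell-1}$. Hence the number of admissible elements equals $p^{2\ell}-p^{2\ell-2}=p^{2\ell-2}(p^2-1)$.

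Next I would check that $\left(\bz/p^\ell\bz\right)^*$ acts freely on the admissible elements. Suppose $\lambda(a,b)=(a,b)$ with $(a,b)$ admissible; then $(\lambda-1)a\equiv(\lambda-1)b\equiv 0\modu{p^\ell}$. Since $\gcd(a,b,p^\ell)=1$ there are integers $u,v$ with $ua+vb\equiv 1\modu{p^\ell}$, so $\lambda-1\equiv(\lambda-1)(ua+vb)\equiv 0\modu{p^\ell}$, and therefore $\lambda=1$. Consequently every $\sim$-class (that is, every orbit) has size exactly $\left|\left(\bz/p^\ell\bz\right)^*\right|=p^{\ell-1}(p-1)$, and
\[
|\cl_{p^\ell}|=\frac{p^{2\ell-2}(p^2-1)}{p^{\ell-1}(p-1)}=p^{\ell-1}(p+1).
\]

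There is no serious obstacle here beyond being careful with the definition of admissibility; once it is translated into $\gcd(a,b,p^\ell)=1$, both the count of admissible elements and the freeness of the $\left(\bz/p^\ell\bz\right)^*$-action are routine.
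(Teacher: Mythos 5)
Your proof is correct and follows essentially the same route as the paper: count the admissible pairs in $\left(\bz/p^\ell\bz\right)^2$ and divide by the size of an orbit under $\left(\bz/p^\ell\bz\right)^*$. Your explicit verification that the action is free (via $ua+vb\equiv 1\modu{p^\ell}$) is a welcome addition, since the paper asserts $|\Psi^{-1}(L)|=p^\ell-p^{\ell-1}$ without spelling out that distinct units give distinct multiples.
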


\begin{proof}
Observe that there are $p^{2 \ell} - p^{2 (\ell -1)}$ admissible pairs
in $\left(\bz/k\bz\right)^2$. 
Let $\Psi:\left(\bz/k\bz\right)^2_\mathrm{adm} \to \cl_k$ be the projectivization (i.e.~quotient) map.
Each line $L \in \cl_k$ has the same number of preimages in $\left(\bz/k\bz\right)^2$ under $\Psi$. 
In particular, by definition  
\[
\Psi^{-1}(L) = \left\{ \lambda (a, b) \mid \lambda \in \left(\bz/ k \bz\right)^* \right\}, 
\]
for some $(a, b) \in \left(\bz/k\bz\right)^2_{\mathrm{adm}}$. Since $|\left(\bz/ k \bz\right)^*| = p^\ell - p^{\ell -1}$, it follows that 
$ |\Psi^{-1}(L)|=p^\ell - p^{\ell -1} $ for any $L \in \cl_k$, and therefore
\[ 
|\cl_k| = \frac{p^{2 \ell} - p^{2 (\ell -1)}}{p^\ell - p^{\ell -1}} = p^{\ell-1}(p+1). \qedhere
\] 
\end{proof}

Together, \autoref{constant}, \autoref{counting lines}, and \autoref{connected} immediately imply the first half of \autoref{exact k}.

\subsection{\texorpdfstring{$b_0(\cf_k) = \infty$}{b0(Fk)=infinite} when \texorpdfstring{$k$}{k} is not a prime power}

We start by introducing a partial order to the vertices of $\cf_k$:

\begin{definition}
\label{ordering}
We write $\displaystyle \frac a b \prec \frac p q$ when $\displaystyle \frac a b \sim \frac p q$ in $\cf_k$ and $|a| \le |p|$ and $|b| \le |q|$.
\end{definition}

It is easy to see that $\prec$ is reflexive, antisymmetric, and transitive, and so defines a partial order on $V(\cf_k)$. When $\frac a b \prec \frac p q$ we refer to $a/b$ as a \emph{predecessor} of $p/q$ and $p/q$ as a \emph{successor} of $a/b$.

\begin{lem}[Eventually Comparable]
\label{comparable}
Suppose that $a/b $ is a vertex of $ \cf_k$ such that $|a|+|b|>k$. Then any neighbor of $a/b$ in $\cf_k$ is comparable to $a/b$.
\end{lem}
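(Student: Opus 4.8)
The plan is to argue by contradiction: assuming some neighbor $x/y$ of $a/b$ in $\cf_k$ is incomparable to $a/b$, I will derive a lower bound on $|a|+|b|$ that violates the hypothesis. First I dispense with a degenerate case: since $a/b$ is a reduced fraction, a vanishing coordinate would force the other to be $\pm1$, hence $|a|+|b|=1\le k$, contrary to assumption; so we may take $|a|,|b|\ge 1$. Next I unwind the partial order of \autoref{ordering}: a neighbor $x/y$ is comparable to $a/b$ exactly when either $|x|\le|a|$ and $|y|\le|b|$ (so $x/y\prec a/b$) or $|x|\ge|a|$ and $|y|\ge|b|$ (so $a/b\prec x/y$). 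Negating both, an incomparable neighbor must satisfy one of (I) $|x|<|a|$ and $|y|>|b|$, or (II) $|x|>|a|$ and $|y|<|b|$.

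The heart of the argument is then a single estimate in each case. Using that $d_\cf(a/b,x/y)=k$ says $|ay-bx|=k$, the inequality $|ay-bx|\ge\big||ay|-|bx|\big|\ge |a||y|-|b||x|$, and integrality (so $|y|>|b|$ gives $|y|\ge|b|+1$, etc.), in case (I) we get
\[
k=|ay-bx|\ \ge\ |a||y|-|b||x|\ \ge\ |a|\big(|b|+1\big)-|b|\big(|a|-1\big)\ =\ |a|+|b|\,,
\]
which contradicts $|a|+|b|>k$. Case (II) is symmetric, swapping the two terms: $k\ge|b||x|-|a||y|\ge|b|(|a|+1)-|a|(|b|-1)=|a|+|b|$, again impossible. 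Hence every neighbor of $a/b$ is comparable to it.

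I do not expect a genuine obstacle here; the content is elementary once the order relation is made explicit. The only places to be careful are the bookkeeping when negating ``comparable'' (checking that (I) and (II) exhaust the incomparable possibilities), the passage from strict inequalities to $\pm1$ bounds via integrality, and the trivial zero-coordinate case. In particular no use of \autoref{neighbors in Fk} or of the $\PSL(2,\bz)$-action is required, though one could alternatively run the estimate along the parametrization of neighbors given there.
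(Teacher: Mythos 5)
Your proof is correct and follows essentially the same route as the paper: both argue by contradiction that an incomparable neighbor would force $k=|ay-bx|\ge |a|+|b|$, violating the hypothesis $|a|+|b|>k$. The only difference is organizational — the paper normalizes denominators to be positive and first rules out opposite signs of the numerators before an exact computation with the differences $s=|x|-|a|$, $t=b-y$, whereas you split on which coordinate strictly grows and use the reverse triangle inequality together with integrality ($|x|\le|a|-1$, $|y|\ge|b|+1$), which slightly streamlines the sign bookkeeping; your case analysis (I)/(II) does exhaust the incomparable possibilities, so the argument is complete.
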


\begin{proof}
We employ the following simple observation: if $m$ and $n$ are of the same sign then $|m-n|=||m|-|n||$ and $|m+n|=|m|+|n|$. 

Let $x/y$ be a neighbor of $a/b$ in $\cf_k$, i.e.~$|ay-bx|=k$, with $b,y>0$. 
If $a$ and $x$ have opposite signs then we have 
\[
|ay-bx| = |a|y + |x|b \ge |a| + b >k~,
\]
by the assumption on $a$ and $b$.
This contradiction implies that $a$ and $x$ have the same sign.

Let $s=|x|-|a|$ and $t=b-y$.
If $x/y$ and $a/b$ are not comparable, then $s$ and $t$ must be non-zero and of the same sign.
Thus we have
\[
|ay-bx| = | |a|y - b|x| | = | |a| (b-t) - b(s+|a|) | = | t|a| + sb | = |t| |a| + |s| b \ge |a| + b >k~.
\]
This contradiction implies that $x/y$ and $a/b$ are comparable.
\end{proof}

In fact, it is possible to classify the possibilities for predecessors of a vertex.
\clearpage

\begin{lem}
\label{predecessors}
The number of predecessors of a vertex are as follows:
\begin{enumerate}
\item If $k$ is even, each vertex has at most one predecessor.
\item If $k$ is odd, each vertex has at most two predecessors. Moreover, if $p/q$ does have two predecessors $a_i/b_i$, then $a_1/b_1 \sim a_2/b_2$ in $\cf_k$.
\end{enumerate}
\end{lem}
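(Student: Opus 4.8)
The plan is to reduce to a normal form using $\PSL(2,\bz)$ and then count solutions to a determinant equation. First I would apply an element of $\PSL(2,\bz)$ taking the vertex $p/q$ to $1/0$; this is harmless since $\PSL(2,\bz)$ preserves $d_\cf$ and hence the partial order $\prec$ (it preserves adjacency, and one checks it respects the size conditions $|a|\le|p|$, $|b|\le|q|$ after putting things in normal form — actually the cleaner route is to observe that predecessors of $1/0$ are exactly the neighbors of $1/0$ with $|b|\le 0$, forcing a very rigid shape). A neighbor $x/y$ of $1/0$ satisfies $d_\cf(1/0,x/y)=|y|=k$, so neighbors of $1/0$ are the vertices $x/k$ with $\gcd(x,k)=1$ (taking $y>0$). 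For $x/k$ to be a predecessor of $1/0$ we need $|x|\le 1$ and $|k|\le 0$; the latter never holds, so in fact $1/0$ has \emph{no} predecessors — which tells me the right normalization is different.

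So instead I would take $p/q$ to be a generic vertex and apply an element of $\PSL(2,\bz)$ sending one of its predecessors to $0/1$. If $a/b\prec p/q$, send $a/b\mapsto 0/1$ and $p/q\mapsto p'/q'$. Adjacency gives $|p'|=k$ (since $d_\cf(0/1,p'/q')=|p'|$), and the size conditions $|a|\le|p|$, $|b|\le|q|$ transform into statements about $0/1$ and $p'/q'$; the content is that $0/1\prec p'/q'$, i.e.\ $0\le|p'|$ and $1\le|q'|$, the first automatic and the second saying $q'\ne 0$. So WLOG a predecessor is $0/1$ and $p/q=\pm k/q$ with $q>0$, $\gcd(k,q)=1$. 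Now I enumerate \emph{all} predecessors of this $\pm k/q$: by \autoref{neighbors in Fk} the neighbors of $\pm k/q$ have the form $(x_0\pm km)/(y_0+qm)$, but it is cleaner to say directly that a neighbor $x/y$ (with $y>0$) satisfies $|{\pm k}\cdot y - q x| = k$, i.e.\ $ky \mp qx = \pm k$, so $x$ runs over a single residue class mod $k/\gcd(k,q)=k$ and correspondingly $y$ is determined mod $q$. A predecessor additionally satisfies $|x|\le k$ and $0<y\le q$. The key observation (already used in \autoref{comparable} and its proof) is that once $|x|+|y|>k$ a neighbor is automatically comparable; here I want the reverse — I bound how many lattice points $(x,y)$ with $|x|\le k$, $0<y\le q$ lie on the relevant arithmetic progression and are primitive.

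The counting goes like this: the predecessors $x/y$ satisfy $x\equiv c\ (\mathrm{mod}\ k)$ for a fixed $c$ with $|c|< k$ (one may arrange $|c|\le k/2$ when $k$ is even, or $|c|\le (k-1)/2$ when $k$ is odd, by choosing the representative; but care is needed since $x=k$ with $q\mid$ something might occur), and $|x|\le k$. When $k$ is even, $x=c$ and $x=c\pm k$ cannot all satisfy $|x|\le k$ with the right denominators in range — more precisely, distinct values of $x$ in $[-k,k]$ congruent mod $k$ differ by exactly $2k$ or are $\{-k,0,k\}$-type, and primitivity $\gcd(x,k)=1$ rules out the even ones, leaving at most one; that is case (1). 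When $k$ is odd there can be two consecutive-ish values, say $x=c$ and $x=c-k$ (opposite signs), giving two predecessors $a_1/b_1$ and $a_2/b_2$; the moreover clause is that these are adjacent, which I would verify by computing $d_\cf(a_1/b_1,a_2/b_2)$ directly — with $a_1-a_2=\pm k$ and the denominators $b_1,b_2$ forced by the congruence to differ by the corresponding multiple of $q$, the determinant $|a_1 b_2 - a_2 b_1|$ collapses to $k$ after substitution (this mirrors the clique-of-size-three phenomenon $\{1/0,1/k,2/k\}$ in \autoref{clique number k}).

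The main obstacle I anticipate is the bookkeeping in the counting step: pinning down exactly which residue $c$ and denominator offsets arise, handling the boundary cases $|x|=k$ and $y=q$ carefully so that the parity argument (even $k$ forces $\gcd>1$ on one of the two candidates) is airtight, and making sure the $\PSL(2,\bz)$ normalization genuinely preserves the partial order rather than just adjacency. I would isolate the parity/primitivity dichotomy as the crux: for even $k$, any two neighbors $x/y$, $x'/y'$ of the normal-form vertex with both in the predecessor range have $x\equiv x'\ (\mathrm{mod}\ k)$ and $|x-x'|\le 2k$ with $|x-x'|\ne k$ impossible only by a size argument — actually $|x-x'|\in\{0,k,2k\}$ and $|x-x'|=k$ would make one of $x,x'$ even hence share the factor $2$ with $k$, while $|x-x'|=2k$ with both in $[-k,k]$ forces $\{x,x'\}=\{-k+c,\ c+k\}\cap[-k,k]$ which is too tight — so at most one survives. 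Once that dichotomy is set up cleanly, both parts of the lemma follow quickly.
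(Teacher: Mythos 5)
There is a genuine gap, and it is the step you yourself flagged as ``bookkeeping'': the $\PSL(2,\bz)$ normalization sending a predecessor $a/b$ to $0/1$ does \emph{not} preserve the partial order $\prec$ of \autoref{ordering}, nor the number of predecessors of a vertex. The relation $\prec$ is defined through the absolute values of the integer numerators and denominators, and these are not invariants of linear fractional transformations; only adjacency and $d_\cf$ are preserved. Concretely, in $\cf_3$ the vertex $1/0$ has no predecessors at all, while its image $2/1$ under $\left(\begin{smallmatrix}2&1\\1&1\end{smallmatrix}\right)\in\PSL(2,\bz)$ has the predecessor $-1/1$ (indeed $d_\cf(-1/1,2/1)=3$, $|-1|\le 2$, $1\le 1$); so the predecessor count genuinely changes under the action. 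Your first attempted normalization ($p/q\mapsto 1/0$) failed for exactly this reason, and the second one fails the same way: the assertion that the size conditions ``transform into'' $0/1\prec p'/q'$ is not justified, and once it fails, the entire enumeration of lattice points $(x,y)$ with $|x|\le k$, $0<y\le q'$ in the normalized picture says nothing about the predecessors of the original vertex. (Note that the paper itself exploits this non-invariance in the proof of \autoref{tree}, where applying $\left(\begin{smallmatrix}1&N\\0&1\end{smallmatrix}\right)$ is used precisely to change levels and comparability.)

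The paper avoids any normalization: it works with $p/q$ directly, splits into the case where $1/0$ is a predecessor (which forces $q=k$, and then \autoref{neighbors in Fk} writes all neighbors as $(1+pm)/(mk)$, with $|1+pm|\le|p|$ allowing at most two values of $m$) and the case where it is not (then all neighbors are $(a_1+pm)/(b_1+qm)$ for a fixed predecessor $a_1/b_1$, and the size constraints force $|m|\le 2$ and $m\le 0$, after which the $m=-2$ candidate is eliminated). In both cases the two surviving candidates are shown by a direct determinant computation to be adjacent in $\cf_k$, and the even case then follows from \autoref{clique number k}: two mutually adjacent predecessors would give a triangle, impossible when $k$ is even. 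If you want to salvage your counting strategy, you must run it on the vertex $p/q$ itself (as the paper does), not on a normalized representative; your residue-class and parity considerations then essentially reduce to the paper's argument, but as written the reduction to ``WLOG the predecessor is $0/1$ and the vertex is $\pm k/q$'' is where the proof breaks.
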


\begin{proof}
If $q=0$ then there is nothing to show; indeed in this case $p/q$ can have no predecessors. We thus assume $q>0$, and proceed by considering two possible cases: 
\begin{enumerate}[(i)]
\item\label{1/0} the vertex $1/0$ is a predecessor, or 
\item\label{not 1/0}the vertex $1/0$ is not a predecessor.
\end{enumerate}

In case \eqref{1/0} we have $q=k$, and the neighbors of $p/q$ have the form $(1+pm)/(0+mk)$ with $m\in\bz$ by \autoref{neighbors in Fk}. The inequality $|1+pm|\le |p|$ makes it clear that there are at most two such values of $m$, one of which is $m=0$. Moreover, if there are two then we have 
\[
d_{\mathcal{F}}\left( \frac{1+pm}{mk}, \frac{1}{0} \right) =  | mk|= k~,
\]
so that both of these predecessors are connected by an edge in $\cf_k$.

For case \eqref{not 1/0}, let $a_1/b_1$ be a predecessor of $p/q$.
By \autoref{neighbors in Fk} any other incident vertex to $p/q$ has the form $( a_1+pm)/(  b_1+qm)$ for some integer $m$. For such a vertex to be a predecessor we must have $| a_1+pm| \leq |p|$ and $|   b_{1}+qm| \leq q$. 
In this case by the triangle inequality we have
\[
q \ge | b_1 + qm| \ge q|m| - b_1 \ge q(|m|-1)~,
\]
so that $|m|\le 2$. Moreover, because $b_1$ and $q$ are positive we have $m\le0$.
This means that the three possible predecessors of $p/q$ are $\frac{2p-a_1}{2q-b_1}$, $\frac{p-a_1}{q-b_1}$, and $\frac{a_1}{b_1}$.
If the first of these is indeed a predecessor, we would have $q\ge b_1\ge q$ and $|p| \ge |a_1| \ge |p|$, so that $a_1/b_1 = -p/q$. In this case, neither of the other two possibilities are predecessors.
This narrows the possible predecessors of $p/q$ to $\frac{p-a_1}{q-b_1}$ and $\frac{a_1}{b_1}$. Moreover, it is immediate that 
\[
d_{\mathcal{F}}\left( \frac{p-a_1}{q-b_1},\frac{a_1}{b_1} \right) =  k~.
\]

Thus we conclude that in either case \eqref{1/0} or \eqref{not 1/0} the vertex $p/q$ has at most two predecessors $a_i/b_i$, and if it does have two then $a_1/b_1 \sim a_2/b_2$ in $\cf_k$.
If $k$ is even then \autoref{clique number k} implies that $p/q$ can have at most one of these as predecessors.
\end{proof}

We now introduce the exhaustion
\[
\cf_k^{(1)} \subset \cf_k^{(2)} \subset \ldots \subset \cf_k
\]
referred to in the introduction.

\begin{definition}
\label{filtration}
The \emph{level} of a vertex $p/q$ is $\max\{|p|,|q|\} $.
For each $m\in \bn$, let $\cf_k^{(m)}$ indicate the subgraph of $\cf_k$ induced on vertices whose level is at most $m$.
\end{definition}

As a consequence of \autoref{comparable} and \autoref{predecessors}, we obtain:

\begin{prop}[Eventual Monotonicity]
\label{monotone}
For $m> k$, if $v$ and $w$ are vertices in distinct components of $\cf_k^{(m)}$, then they are in distinct components of $\cf_k^{(m+1)}$.
\end{prop}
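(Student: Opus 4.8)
The plan is to show that a newly-appearing vertex at level $m+1$ cannot merge two old components, by arguing that such a vertex can only be adjacent to vertices of level $\le m$ through a predecessor relationship. First I would observe that $\cf_k^{(m+1)}$ is obtained from $\cf_k^{(m)}$ by adding the vertices of level exactly $m+1$, together with the edges incident to them. So it suffices to show: adding a single vertex $v$ of level $m+1$, together with all its edges into $\cf_k^{(m)}$, does not decrease the number of components — equivalently, $v$ has at most one neighbor in each component of $\cf_k^{(m)}$, and in fact all neighbors of $v$ in $\cf_k^{(m)}$ lie in a single component. (One must be slightly careful since several level-$(m+1)$ vertices are added at once, but two distinct level-$(m+1)$ vertices $v,v'$ with $v \sim v'$ have $|v|+|v'| > k$ when $m > k$, hence are comparable by \autoref{comparable}, so handling them one at a time is justified by ordering the additions.)

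The key point is that every neighbor $w$ of $v$ inside $\cf_k^{(m)}$ has level strictly smaller than that of $v$, so by the comparability dichotomy ($m+1 > k$ forces $|v|+|w| > k$, so \autoref{comparable} applies) $w$ is comparable to $v$, and since $\mathrm{level}(w) < \mathrm{level}(v)$ we must have $w \prec v$, i.e.\ $w$ is a predecessor of $v$. By \autoref{predecessors}, $v$ has at most one predecessor when $k$ is even, in which case $v$ attaches to $\cf_k^{(m)}$ along at most one vertex and no components merge. When $k$ is odd, $v$ has at most two predecessors $a_1/b_1$ and $a_2/b_2$, but the second clause of \autoref{predecessors} gives $a_1/b_1 \sim a_2/b_2$ in $\cf_k$; since both have level $\le m$, that edge already lies in $\cf_k^{(m)}$, so the two predecessors are already in the same component of $\cf_k^{(m)}$. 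Either way, all of $v$'s neighbors in $\cf_k^{(m)}$ belong to one component, so attaching $v$ cannot connect two previously distinct components.

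Assembling this: if $v$ and $w$ are in distinct components of $\cf_k^{(m)}$, then after adjoining all level-$(m+1)$ vertices one at a time (each contributing edges only to a single pre-existing component, by the above), no two components of $\cf_k^{(m)}$ ever get fused; hence $v$ and $w$ remain in distinct components of $\cf_k^{(m+1)}$. The main obstacle I anticipate is the bookkeeping around adding multiple level-$(m+1)$ vertices simultaneously — I need to confirm that an edge between two such vertices does not create a "bridge" that merges old components, which is exactly where the comparability of $v,v'$ (and the fact that one of them is then a predecessor of the other, contradicting that both have the maximal level $m+1$ unless they are equal up to sign) rules out the bad configuration. Once that is pinned down, the argument is a direct application of Lemmas \autoref{comparable} and \autoref{predecessors}.
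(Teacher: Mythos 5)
Your overall strategy is the same as the paper's: a vertex of level $m+1$ can only attach to $\cf_k^{(m)}$ through predecessors, and \autoref{predecessors} then confines those attachments to a single old component. That half of your argument is correct and complete: if $w\in\cf_k^{(m)}$ is a neighbor of a level-$(m+1)$ vertex $v$, then $w$ is comparable to $v$ by \autoref{comparable}, it cannot be a successor because its level is strictly smaller, hence $w\prec v$; there are at most two such predecessors, and if there are two they are joined by an edge of $\cf_k$ both of whose endpoints have level at most $m$, so they already lie in one component of $\cf_k^{(m)}$.

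The gap is your treatment of edges between two level-$(m+1)$ vertices. You dismiss them by saying that if $v\sim v'$ with both at level $m+1$, then one is a predecessor of the other, ``contradicting that both have the maximal level $m+1$ unless they are equal up to sign.'' That inference is not valid as stated: the relation $\prec$ only asks for coordinate-wise domination (together with adjacency), and coordinate-wise domination does not force a strict drop in level --- for example $(1,m+1)$ and $(m,m+1)$ are coordinate-wise comparable and both have level $m+1$. What is true, and what the paper proves as the first of its two observations, is that two distinct level-$(m+1)$ vertices are never adjacent in $\cf_k$; but the reason is a determinant computation, not a level count: if they were adjacent they would be comparable by \autoref{comparable}, forcing either both denominators to equal $m+1$ or both numerators to have absolute value $m+1$ (with a common sign), and in either case $d_\cf$ of the pair is a positive multiple of $m+1>k$, contradicting adjacency. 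Once that computation is supplied, your one-vertex-at-a-time argument goes through and is essentially the paper's proof in a different packaging (the paper instead argues by shortening a hypothetical path through a level-$(m+1)$ vertex). Alternatively, you could sidestep the computation by adding the finitely many level-$(m+1)$ vertices in an order compatible with $\prec$, so that every already-present neighbor of the vertex being added is one of its predecessors; but as written, the key step is asserted with an incorrect justification rather than proved.
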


\begin{proof}
We make two observations: 
\begin{enumerate}
\item Any pair of vertices $a/b$ and $x/y$ at level $m+1$ are not incident in $\cf_k$, and
\item any vertices $u_1$ and $u_2$ that are not incident in $\cf_k^{(m)}$ cannot both be neighbors of a vertex at level $m+1$.
\end{enumerate}
Indeed, for the first, incident vertices at level $m+1$ are comparable by \autoref{comparable}, so suppose $a/b \prec x/y$; in either case $b=y=m+1$ or $|a|=|x|=m+1$ we have $|ay-bx|>k$.
For the second, by \autoref{comparable} such vertices would be comparable to the vertex at level $m+1$ and by \autoref{predecessors} they would be incident.

Now suppose towards contradiction that $v=v_0 \sim v_1 \sim \ldots \sim v_n =w$ is a shortest path in $\cf_k^{(m+1)}$ between $v$ and $w$. Evidently, there must exist a vertex $v_i$ at level $m+1$ in this path, since $v$ and $w$ are in distinct components of $\cf_k^{(m)}$. By the first observation above, both $v_{i-1}$ and $v_{i+1}$ must be in $\cf_k^{(m)}$, and by the second $v_{i-1} \sim v_{i+1}$. Thus there is a shorter path from $v$ to $w$. This contradiction implies that there is no path from $v$ to $w$ in $\cf_k^{(m+1)}$.
\end{proof}

It follows that $m\mapsto b_0\left( \cf_k^{(m)}\right)$ is non-decreasing for $m>k$. Finally, we are able to show that this sequence increases infinitely often. Precisely,

\begin{prop}
\label{isolated vertices}
If $k$ is not a prime power then there exist infinitely many levels $m$ containing vertices isolated in $\cf_k^{(m)}$.
\end{prop}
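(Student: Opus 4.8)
The plan is to exhibit, for every $j\ge 1$, an explicit vertex $v_j$ of level $m_j := a+bj$ that is isolated in $\cf_k^{(m_j)}$, where $k=ab$ is a factorization of $k$ into coprime factors $a,b\ge 2$. Such a factorization exists precisely because $k$ is not a prime power (take $a$ to be the largest prime-power divisor of $k$ and $b=k/a$). Since $m_1<m_2<\cdots$ are distinct, this produces infinitely many levels with the required property — and in particular infinitely many with $m_j>k$, which is exactly what combines with \autoref{monotone} to force $b_0(\cf_k^{(m)})$ to strictly increase infinitely often.

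Concretely, I would set $v_j := \dfrac{-a-bj}{b}$. This is a vertex of $\cf_k$ because $\gcd(a+bj,b)=\gcd(a,b)=1$, and its level is $\max\{a+bj,b\}=a+bj=m_j$ once $j\ge 1$. Writing $p:=-a-bj$, the neighbours of $v_j$ in $\cf_k$ are exactly the primitive vectors $(X,Y)\in\bz^2$, taken up to sign, with $|pY-bX|=k$. Since $\gcd(p,b)=1$ and $(a,0)$, $(-a,0)$ solve $pY-bX=-k$ and $pY-bX=k$ respectively, \autoref{elt num th} shows these integer solutions lie on the two lines $\{(a,0)+t(p,b):t\in\bz\}$ and $\{(-a,0)+t(p,b):t\in\bz\}$, which are negatives of one another and hence describe the same set of vertices. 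So it suffices to run through $L:=\{(a+tp,\,tb):t\in\bz\}$ and check that every lattice point on $L$ of level at most $m_j$ fails to be primitive.

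The bookkeeping here is routine. Because $m_j=a+bj>a$, the first coordinate $|a+tp|=|a-tm_j|$ already exceeds $m_j$ whenever $t\le -1$ or $t\ge 2$; the only surviving values are $t=0$ and $t=1$, giving the points $(a,0)$ and $(a+p,b)=(-bj,b)$, both of which do lie in the box $[-m_j,m_j]^2$. But $\gcd(a,0)=a\ge 2$ and $\gcd(-bj,b)=b\ge 2$, so neither is primitive, hence neither is a vertex of $\cf_k$ at all. Therefore $v_j$ has no neighbour of level $\le m_j$, i.e.\ $v_j$ is isolated in $\cf_k^{(m_j)}$.

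The conceptual core of the argument — and the step where the hypothesis that $k$ is not a prime power is genuinely used — is that $(a,0)$ and $(-bj,b)$ are \emph{consecutive} lattice points on the line $L$, so their gcds cannot share a common prime (such a prime would divide the primitive direction vector $(p,b)$). Forcing two consecutive points to be simultaneously imprimitive therefore requires $k$ to admit two coprime nontrivial factors, which is precisely the obstruction that made the reduction in \autoref{connected} break down. The one mild subtlety to keep track of is the degenerate behaviour at $j=0$: one really does need $j\ge 1$, so that $m_j>a$ strictly and the point at $t=1$ genuinely lands inside the box; beyond that, the whole argument is a short direct computation, and I anticipate no serious obstacle.
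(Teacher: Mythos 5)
Your construction is correct: with $k=ab$, $\gcd(a,b)=1$, $a,b\ge 2$, the vertex $v_j=(-a-bj)/b$ is reduced, has level $m_j=a+bj$ for $j\ge 1$, and by \autoref{neighbors in Fk} every integer solution of $|pY-bX|=k$ (with $p=-a-bj$) lies, up to sign, on the single line $\{(a+tp,tb)\}$; your bookkeeping correctly isolates $t=0,1$ as the only points in the level-$m_j$ box, and both $(a,0)$ and $(-bj,b)$ fail primitivity since $a,b\ge 2$. This is a genuinely different verification from the paper's, although the underlying construction is of the same flavor (the paper takes the vertex $(q+p^{\ell}n)/p^{\ell}$ with denominator the full $p$-part of $k$ and numerator congruent to the cofactor $c$ mod $p$, i.e.\ the roles of the two coprime factors are essentially swapped). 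The paper checks isolation by invoking its order-theoretic machinery: for levels exceeding $k$, \autoref{comparable} forces any neighbor to be a predecessor or successor, predecessors are excluded by reducing the determinant equation mod $p^{\ell}$ and then mod $p$ to contradict primitivity, and successors are excluded by a direct determinant estimate. Your argument bypasses \autoref{comparable} and \autoref{predecessors} entirely, enumerating the solution line directly; this makes it more self-contained and, as a bonus, it produces isolated vertices at every level $m_j\ge a+b$, not only at levels greater than $k$ (your remark that infinitely many of these satisfy $m_j>k$, as needed to combine with \autoref{monotone}, is the right thing to flag). The paper's route, by contrast, reuses lemmas it needs anyway for \autoref{monotone}, so neither approach is wasteful; yours trades that reuse for a shorter, more elementary computation.
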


\begin{proof}
Let $p$ be a prime divisor of $k$ such that $k=p^{\ell}c$ for some $\ell>0$ and $c$ relatively prime to $p$. Choose $q \equiv c \modu p$ where $0<q<p^{\ell}$. 
We claim that the vertices $(q+p^{l}n)/p^{\ell}$ are isolated in $\cf_{k}^{(q+p^{\ell}n)}$ when $n$ is such that $q+p^{l}n>k$. 
For such $n$ by \autoref{comparable} we need only check that $(q+p^{l}n)/p^{\ell}$ has no predecessors or successors in $\cf_{k}^{(q+p^{\ell}n)}$.

Suppose $a/b \in \mathcal{F}_{k}^{(q+p^{\ell}n)}$ is a predecessor to $(q+p^{\ell}n)/p^{\ell}$. Then we compute 
\[ 
d_\cf\left( \frac{q+p^{\ell}n}{p^{\ell}}, \frac{a}{b} \right)  = qb + bp^{\ell}n -ap^{\ell} = k = p^{\ell}c
\]
Reducing the above equation mod $p^\ell$ immediately 
yields $b \equiv 0 \modu {p^\ell}$ since $q$ was chosen to be coprime to $p$. However, $a/b$ is a predecessor, so $b=p^{\ell}$. We cancel $p^{\ell}$ above and rearrange to get
\[ 
p^{\ell}n - a = c - q \equiv 0 \modu p
\]
Thus, $a \equiv 0 \modu {p}$ and $a$ and $b$ have a common factor. 
This contradiction implies that $(q+p^{\ell}n)/p^{\ell}$ has no predecessor.

Now suppose that $a/b$ is a successor to $(q+p^{\ell}n)/p^{\ell} \in \mathcal{F}_{k}^{(q+p^{\ell}n)}$. Then we have $a= q+p^{\ell}n$ and $b > p^{\ell}$. Let $b = p^{\ell} +t $ for some positive integer $t$. Again we compute  
$$d_\cf\left( \frac{q+p^{\ell}n}{p^{\ell}}, \frac{a}{b} \right) =
  d_\cf\left( \frac{q+p^{\ell}n}{p^{\ell}}, \frac{q+p^{\ell}n}{p^{\ell}+t} \right) =  
  tq+tp^{\ell}n = t(q+p^{\ell}) > k
$$
so these vertices cannot be incident. Thus, $(q+p^{\ell}n)/p^{\ell}$ has no successors or predecessors and is therefore isolated. 
\end{proof}

\begin{rem}
\autoref{isolated vertices} relies on the assumption that $k$ is not a prime power, and the conclusion would not hold without it: for $k=p^\ell$,
when $m$ is sufficiently large there are no isolated vertices at level $m$.
\end{rem}

\subsection{Subgraphs of \texorpdfstring{$\cf_k$}{Fk}}
We give more attention to the subgraphs of $\cf_k$ given by preimages under $\phi_k$. 
Choose a line $L\in \cl_k$.

\begin{prop}
\label{planarity}
The restriction of the natural embedding $\cf\to \overline{\bh}$ to the preimage $\phi_k^{-1}(L)$ is a planar embedding.
\end{prop}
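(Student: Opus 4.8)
The plan is to match the restriction of the natural embedding to $\phi_k^{-1}(L)$ with a sub-configuration of the natural embedding of the full Farey graph $\cf$ --- the Farey tessellation --- which is a planar (indeed triangulated) picture of $\bh$ and so has no two crossing edges. Accordingly, the whole point will be to see that a single fibre of $\phi_k$, with its induced edges and their geodesic drawings, ``is'' a sub-picture of $\cf$ after a change of coordinates.

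First I would reduce to a convenient line. Since $\PSL(2,\bz)$ acts on $\overline{\bh}$ by orientation-preserving isometries, carrying geodesics to geodesics and preserving $d_\cf$, it preserves the natural embedding and sends edges of $\cf_k$ to edges of $\cf_k$; and by \autoref{transitive lines} the map $\phi_k$ is equivariant for a transitive $\PSL(2,\bz)$-action on $\cl_k$. Thus, given $L$, I pick $g\in\PSL(2,\bz)$ with $g\cdot L_0=L$, where $L_0=[(1,0)]\in\cl_k$; then $g$ carries $\phi_k^{-1}(L_0)$, with its induced subgraph, onto $\phi_k^{-1}(L)$ with its induced subgraph, so one of the two restricted embeddings is planar iff the other is. It therefore suffices to treat $L=L_0$.

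Next I would describe $\phi_k^{-1}(L_0)$ and change coordinates. One checks that $\phi_k(p/q)=L_0$ exactly when $k\mid q$ (in which case $\gcd(p,k)=1$ automatically, since the fraction is reduced), so $\phi_k^{-1}(L_0)=\{\,p/q\text{ reduced},\ q\ge 0:\ k\mid q\,\}$, with $q=0$ giving $1/0$. Now apply the hyperbolic isometry $h\co z\mapsto kz$ of $\bh$: it fixes $1/0$ and sends $p/(km)\mapsto p/m$. The substance of the argument is that $h$ restricts to a graph isomorphism from $\cf_k$ restricted to $\phi_k^{-1}(L_0)$ onto the subgraph of $\cf$ induced on $P:=\{\,p/m\text{ reduced}:\ \gcd(p,k)=1\,\}\cup\{1/0\}$. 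Indeed $h$ matches the vertex sets bijectively, because $p/(km)$ is a reduced fraction precisely when $p/m$ is reduced and $\gcd(p,k)=1$. For edges: if both denominators are nonzero then $d_\cf(p/(km),a/(kn))=k\,|pn-am|$, so $\{p/(km),a/(kn)\}$ is an edge of $\cf_k$ iff $\{p/m,a/n\}$ is an edge of $\cf$; and $1/0\sim a/b$ in $\cf_k$ forces $b=k$, which under $h$ becomes the edge $1/0\sim a$ of $\cf$ (legitimate, as $\gcd(a,k)=1$).

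Finally, being an isometry of $\bh$, the map $h$ carries the restriction of the natural embedding of $\cf_k$ to $\phi_k^{-1}(L_0)$ onto the restriction of the natural embedding of $\cf$ to $P$; the latter is obtained from the planar Farey tessellation by deleting vertices together with their incident edges, hence introduces no crossings, so the original restricted embedding is planar as well. I expect the main obstacle to be the bookkeeping in the middle step --- verifying that $h$ lands on an \emph{induced} subgraph of $\cf$, and in particular handling the vertex $1/0$ correctly (its $\cf_k$-neighbours have denominator exactly $k$, not an arbitrary multiple of $k$) --- together with invoking the standard planarity of the Farey tessellation. Note that connectivity of $\phi_k^{-1}(L)$ is never used, so the argument applies for every $k$, not only prime powers.
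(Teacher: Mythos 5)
Your proposal is correct, but it takes a genuinely different route from the paper's proof. You reduce to the line $L_0=[(1,0)]$ via \autoref{transitive lines}, observe that the fibre consists of $1/0$ together with reduced fractions whose denominator is divisible by $k$, and then conjugate by the hyperbolic dilation $z\mapsto kz$; the computation $d_\cf\bigl(p/(km),a/(kn)\bigr)=k\,|pn-am|$ and your separate treatment of the vertex $1/0$ correctly show that this isometry carries every $\cf_k$-edge of the fibre, drawn as a geodesic, onto an edge of the Farey tessellation, so planarity follows from the standard planarity of that tessellation. The paper argues differently and more directly: it assumes two edges of the fibre have interlocking endpoints $v<v'<w<w'$ on $\partial_\infty\bh$, normalizes by $\PSL(2,\bz)$ so that $v=0/1$ and $w=k/q$ (after which every numerator in the fibre is divisible by $k$), and derives a contradiction from a short determinant computation showing the second edge would have pairing strictly larger than $k$. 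The trade-off is that the paper's argument is self-contained --- it does not invoke planarity of the Farey tessellation as a black box (specialized to $k=1$ it essentially reproves it) --- whereas your argument is more conceptual and gives a bonus structural fact: each fibre of $\phi_k$, as an induced subgraph of $\cf_k$, is isomorphic via an isometry of $\bh$ to the induced subgraph of $\cf$ on $\{\,p/m : \gcd(p,k)=1\,\}\cup\{1/0\}$. Both arguments apply to all $k$ and, as you note, use no connectivity.
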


\begin{rem}
See \autoref{cliques} for detail about the `natural embedding' of $\cf$; in particular the above proposition implies that each component of $\cf_k$ is planar.
\end{rem}

\begin{proof}
Suppose on the contrary that, for vertices $v,w,v',w' \in \phi_k^{-1}(L)$ with $v\sim w$ and $v'\sim w'$ in $\cf_{k}$, we have $v<v'<w<w'$. After applying an element of $\PSL(2,\bz)$ (and switching labels if necessary) we may assume that $v=0/1$ and $w=k/q$, and let $v'=x/y$ and $w'=a/b$.

Note that after this normalization, the line $L$ in the image goes through $(0,1)$, and hence for any element $c/d\in\phi_k^{-1}(L)$ we must have that $k$ divides $c$. Thus the inequality $x/y < k/q < a/b$ can be simplified to
\[
\frac {x'} y  < \frac 1 q < \frac {a'} b~,
\]
where $a=a'k$ and $x=x'k$ for $a',x'\neq 0$.
This implies $a'q>b$ and $y>x'q$. Let $a'q=b+t$ and $y=x'q+s$ for positive integers $s,t$. Because $a/b\sim x/y$ in $\cf_k$ (and $x/y<a/b$ by our normalization) we have 
\[
k=ay-bx=(a'y-bx')k=(a'(x'q+s)-(a'q-t)x')k=(a's+x't)k>k~. \qedhere
\]

\end{proof}

When $k$ is even, we characterize the isomorphism type of the graph $\cf_k$:

\begin{prop}
\label{tree}
For $k$ even, each component of $\cf_k$ is an infinite-valence tree.
\end{prop}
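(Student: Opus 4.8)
The plan is to verify, for $k$ even, three properties of each component of $\cf_k$: that it is connected (automatic), of infinite valence, and acyclic. Together these say that each component is an infinite‑valence tree.

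\textbf{Infinite valence.} Since $\PSL(2,\bz)$ acts transitively on $V(\cf)$, it is enough to produce infinitely many neighbors of $1/0$ in $\cf_k$. From $d_\cf(1/0,x/y)=|y|$ we see that the neighbors of $1/0$ are precisely the pairwise distinct vertices $x/k$ with $\gcd(x,k)=1$, of which there are infinitely many (for instance $x\equiv 1\pmod k$); all of them lie in the component of $1/0$.

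\textbf{Acyclicity, first step.} By \autoref{clique number k} the graph $\cf_k$ is triangle‑free. Suppose $\gamma$ is a cycle in $\cf_k$, and let $v=a/b$ be a vertex of $\gamma$ of maximal weight $|a|+|b|$, with cycle‑neighbors $u_-\neq u_+$. If $|a|+|b|>k$, then \autoref{comparable} applies to $v$, so $u_\pm$ are $\prec$‑comparable to $v$; since $u_\pm$ cannot have larger weight than $v$, each $u_\pm$ is in fact a predecessor of $v$ (in the borderline case of equal weight the positive‑denominator convention forces $u_\pm=-a/b$, which is still a predecessor of $a/b$), and \autoref{predecessors}(1) then yields the contradiction $u_-=u_+$. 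Hence every cycle of $\cf_k$ has all its vertices of weight at most $k$, and so lies inside the finite graph $\cf_k^{(k)}$.

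\textbf{Acyclicity, the main obstacle.} It remains to rule out cycles all of whose vertices have weight $\le k$; this is the part I expect to be delicate, since \autoref{comparable} is unavailable there. I would take a cycle $\gamma$ of minimum length — so $\gamma$ is chordless and, being triangle‑free, has length $\ge 4$ — let $v=a/b$ be a vertex of $\gamma$ of maximal weight, and use \autoref{neighbors in Fk} to write the neighbors of $v$ in $\cf_k$ as $(x_0+am)/(y_0+bm)$, with the two cycle‑neighbors of $v$ occurring at integer parameters $m_-<m_+$. The weight $|x_0+am|+|y_0+bm|$ is a convex function of $m$, hence is $\le|a|+|b|$ throughout $[m_-,m_+]$; and a short determinant computation shows that two neighbors of $v$ at consecutive parameters are joined by an edge of $\cf_k$. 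Thus if $m_+=m_-+1$ the two cycle‑neighbors of $v$ are adjacent, producing a triangle with $v$ — impossible. The residual case $m_+-m_->1$ is the real difficulty: the intermediate parameters give further would‑be neighbors of $v$, of weight $\le|a|+|b|$, which must all fail the coprimality condition, and I would try to reach a contradiction by combining this with planarity (\autoref{planarity}): drawn via the natural embedding, $\gamma$ is a geodesic polygon whose vertices occur in the same cyclic order on $\partial\bh$, and the local picture at $v$ — two edges of the polygon together with the extra neighbors forced above — should be shown to produce either a chord of $\gamma$ or a shorter cycle, against minimality.
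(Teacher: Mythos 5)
Your infinite-valence argument and your first acyclicity step are correct and essentially match the paper's: if a cycle contains a vertex $v=a/b$ with $|a|+|b|>k$, then taking $v$ maximal forces both of its cycle-neighbors to be predecessors by \autoref{comparable}, contradicting \autoref{predecessors}(1) when $k$ is even. The problem is the remaining case, cycles all of whose vertices have weight at most $k$: here your text is explicitly not a proof. You leave the case $m_+-m_->1$ open (``I would try to reach a contradiction\dots'', ``should be shown to produce\dots''), and the sketched combination of convexity of the weight in $m$, the failure of coprimality at intermediate parameters, and planarity of the embedding is never carried out; nothing in the sketch rules out, say, a chordless $4$- or $6$-cycle sitting entirely inside the finite graph $\cf_k^{(k)}$. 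So the argument has a genuine gap at exactly the point you flag as the difficulty.

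The paper closes this case with a short trick that avoids analyzing small cycles altogether: the element
$A=\left(\begin{array}{cc}1 & N \\ 0 & 1\end{array}\right)\in\PSL(2,\bz)$
preserves $d_\cf$, hence acts by automorphisms of $\cf_k$ and sends embedded cycles to embedded cycles, while mapping $(a,b)$ to $(a+bN,b)$. Any embedded cycle $C$ contains a vertex $a/b$ with $b\neq 0$, so for $N$ sufficiently large the cycle $A\cdot C$ has a vertex of level (hence weight) greater than $k$, and your first step applies to $A\cdot C$, giving the contradiction. Replacing your unfinished minimal-cycle/planarity analysis with this shearing argument—push the putative cycle into the regime where \autoref{comparable} and \autoref{predecessors} are available—completes the proof.
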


\begin{proof}
Suppose first that a component of $\cf_k$ contains an embedded cycle $C$ that includes some vertex whose level is larger than $k$. 
Among the vertices of $C$ at maximal level, let $v$ be the vertex that is maximal with respect to $\prec$. 
Observe that the two neighbors of $v$ in $C$ are both comparable to it by \autoref{comparable}, and so by maximality they are both predecessors, contradicting \autoref{predecessors}.
Thus any embedded cycle must have its vertex levels bounded by $k$.

Now suppose that $\cf_k$ contains an embedded cycle $C$ whose vertex levels are at most $k$. 
Choose a vertex $a/b\in C$ so that $b\ne 0$---if $C$ is an embedded cycle such a vertex exists. The element  
\[
A=
\left(
\begin{array}{cc}
1 & N \\ 
0 & 1
\end{array}\right)
\]
of $\PSL(2,\bz)$ acts by automorphisms of $\cf_k$, taking $(a,b)$ to $(a+bN,b)$. For $N$ large, $A\cdot C$ is an embedded cycle of $\cf_k$ with a vertex whose level is larger than $k$, reducing to the previous case.
\end{proof}

The previous lemma fails when $k$ is odd (e.g.~by \autoref{clique number k}). Nonetheless, we have:

\begin{prop}
\label{cut}
If $k>1$ every vertex of $\cf_k$ is a cut vertex.
\end{prop}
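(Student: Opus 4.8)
To show that every vertex $v$ of $\cf_k$ is a cut vertex for $k>1$, I would exploit the vertex-transitivity of the $\PSL(2,\bz)$-action (see \autoref{auts sec}): it suffices to prove that a single well-chosen vertex is a cut vertex, so I would normalize $v=1/0$. The neighbors of $1/0$ in $\cf_k$ are exactly the vertices of the form $x/k$ with $\gcd(x,k)=1$, by \autoref{neighbors in Fk}. The strategy is then to exhibit two neighbors of $1/0$ that lie in distinct components of the graph $\cf_k \setminus \{1/0\}$ obtained by deleting $v$. Because $\cf_k$ is connected precisely on preimages of lines under $\phi_k$ (at least when $k$ is a prime power—but for the general statement I should not assume connectivity of the whole ambient graph, only work component-by-component), I would restrict attention to the component $\cc$ of $\cf_k$ containing $1/0$ and show $\cc \setminus \{1/0\}$ is disconnected.

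**Key steps.** First, I would identify two natural neighbors of $1/0$: namely $1/k$ and $(-1)/k = -1/k$ (equivalently $1/(-k)$, but keeping denominators positive, the vertex $-1/k$). Both are vertices of $\cf_k$ since $\gcd(1,k)=1$, and both are incident to $1/0$ since $d_\cf(1/0, \pm 1/k) = k$. The claim is that $1/k$ and $-1/k$ lie in different components of $\cc \setminus \{1/0\}$. To see this, I would argue via the level function and the partial order $\prec$ of \autoref{ordering}, or more directly via a sign/geometry argument: using \autoref{comparable} and \autoref{predecessors}, any path in $\cf_k$ from $1/k$ to $-1/k$ that avoids $1/0$ would have to pass through vertices whose numerators change sign, and I would track how numerators evolve along edges. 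Concretely, \autoref{neighbors in Fk} says that the neighbors of a vertex $a/b$ are $(x_0+am)/(y_0+bm)$; combined with the planar embedding of \autoref{planarity}, a path from $1/k$ to $-1/k$ within $\phi_k^{-1}(L)$ must cross the "line at infinity" $1/0$ in the Farey picture. The cleanest route is probably: after normalizing, use that in $\overline{\bh}$ the preimage $\phi_k^{-1}(L)$ embeds planarly (\autoref{planarity}) with $1/0$ as one of its vertices, and removing a vertex from a tree-like or planar structure separates it; when $k$ is even this is immediate from \autoref{tree} (a tree, and every non-leaf vertex of a tree is a cut vertex, and $1/0$ has infinite valence hence is not a leaf). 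For $k$ odd, I would use the planar embedding together with the classification of predecessors to show that the vertices on "opposite sides" of $1/0$ in the circular order around $\partial\bh$ cannot be joined without passing back through $1/0$.

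**Main obstacle.** The even case is essentially free from \autoref{tree}, so the real content is the odd case, where $\cf_k$ contains triangles and is not a tree. There the difficulty is that local triangles could in principle allow a path to "go around" $1/0$. I expect the main obstacle is converting the planarity statement (\autoref{planarity}) into a genuine separation statement: I would need to pin down that in the planar (indeed, boundary-of-disk) embedding of $\phi_k^{-1}(L)$, the vertex $1/0$ has neighbors appearing in two distinct "arcs" that cannot communicate otherwise. One safe way: show directly that $1/k$ and $-1/k$ have all their successors (under $\prec$) with numerator of a fixed sign, and that at high level any vertex of the component is comparable to its neighbors (\autoref{comparable}), so the sign of the numerator is a locally constant, hence globally constant, invariant along any $1/0$-avoiding path once levels exceed $k$; a finite check handles the bounded-level part. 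I would present this sign-invariance argument as the backbone, with the even case dispatched in one line via \autoref{tree}.
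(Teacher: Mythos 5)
Your backbone is exactly the paper's proof: after normalizing at a single vertex (the paper uses $v=0/1$, so every vertex of $\phi_k^{-1}(L)$ has numerator divisible by $k$), one shows by a one-line determinant computation that no edge of $\phi_k^{-1}(L)$ joins a positive-numerator vertex to a negative-numerator vertex, whence the two signed neighbors of $v$ lie in distinct components of $\phi_k^{-1}(L)\setminus\{v\}$ --- this is precisely your ``sign-invariance,'' and since it holds at every level the comparability/high-level argument, the unspecified low-level finite check, the even-case appeal to \autoref{tree}, and the planarity heuristic (which, as a general principle, is false: deleting a vertex of a planar graph need not disconnect it) are all unnecessary. In your normalization at $1/0$ the same computation closes the argument immediately: every vertex of $\phi_k^{-1}(L)\setminus\{1/0\}$ has denominator a positive multiple of $k$ and nonzero numerator, so an edge between vertices of opposite numerator sign would have determinant at least $2k>k$.
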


\begin{proof}
Again by transitivity of $\Aut(\cf)\curvearrowright V(\cf)$ we need only show this result for one vertex in $\cf_k$, so choose $v =0/1$ and let $L=\phi_k(v)$. 
We will show that there are no edges in $\phi_k^{-1}(L)$ between a positive vertex and a negative vertex.
As $1/0\notin \phi_k^{-1}(L)$, it follows that such vertices must be in distinct components of $\phi_k^{-1}(L)\setminus\{v\}$.

Indeed, suppose that $a/b,x/y\in\phi_k^{-1}(L)$ with $a,b,y>0$ and $x<0$. 
Immediately, we have $x \equiv a \equiv 0 \modu{k}$,
so that there are $a',x'>0$ with $a=a'k$ and $x=-x'k$. It follows that $ay-bx=k(a'y+bx') > k$.
\end{proof}

This is enough to conclude:

\begin{prop}
\label{autos}
For $k>1$, $\Aut(\phi_k^{-1}(L))$ is uncountable.
\end{prop}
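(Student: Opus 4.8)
The plan is to build uncountably many automorphisms of $G := \phi_k^{-1}(L)$ by exploiting the fact, established in \autoref{cut}, that for $k>1$ the distinguished basepoint vertex $v=0/1$ (and hence, by vertex-transitivity of $\PSL(2,\bz)$ on $\cl_k$, \emph{every} vertex of $G$) is a cut vertex. Concretely, after normalizing via \autoref{transitive lines} so that $L$ is the line through $(0,1)$, the proof of \autoref{cut} shows that deleting $v=0/1$ disconnects $G$ into (at least) two pieces: the subgraph $G^+$ induced on vertices $a/b\in\phi_k^{-1}(L)$ with $a>0$ (equivalently $a/b>0$) together with their side, and the subgraph $G^-$ on the negative vertices, with no edges between them. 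The key structural observation is that the map $x/y \mapsto -x/y$ (i.e.\ $(x,y)\mapsto(-x,-y)$ is trivial, but negating only the numerator) is a graph \emph{isomorphism} $G^+ \to G^-$ fixing $v$: it preserves membership in $\phi_k^{-1}(L)$ since $L$ passes through $(0,1)$ and is symmetric under $x\mapsto -x$ mod $k$, and it preserves $d_\cf$ up to sign.

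First I would set up the "rooted piece" language: for each vertex $u$ of $G$ and each edge $e=uw$ incident to $u$, let $G_{u,e}$ be the connected component of $G\setminus\{u\}$ containing $w$, re-adjoined to $u$; since $u$ is a cut vertex this is a proper rooted subgraph. Next I would argue that $G$ contains a vertex $u$ with (at least) two incident edges $e_1,e_2$ such that the rooted graphs $(G_{u,e_1},u)$ and $(G_{u,e_2},u)$ are isomorphic as rooted graphs --- indeed at $v=0/1$ the two pieces $G^+$ and $G^-$ are interchanged by the numerator-negation isomorphism above, giving two isomorphic rooted "halves." More robustly, iterating the cut-vertex property and vertex-transitivity, $G$ is "self-similar enough" that one can find a vertex admitting two interchangeable rooted branches. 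Then the standard argument applies: given any infinite set $S$ of vertices each carrying a pair of isomorphic swappable branches hanging off it, and which are pairwise "independent" (the branch-swap at one vertex does not disturb another), each subset $T\subseteq S$ determines an automorphism $\alpha_T$ of $G$ which performs the branch-swap exactly at the vertices in $T$; distinct $T$ give distinct $\alpha_T$, so $|\Aut(G)|\ge 2^{|S|}\ge 2^{\aleph_0}$.

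The cleanest way to produce such an infinite independent family $S$ is to use planarity and the partial order: by \autoref{comparable} and \autoref{predecessors}, in the "outer" region (levels $>k$) the graph $\phi_k^{-1}(L)$ looks like a forest of rooted trees hanging off the finite "core" at levels $\le k$, with each vertex having possibly many successors but a controlled number ($\le 2$) of predecessors. I would pick an infinite ascending chain $v = u_0 \prec u_1 \prec u_2 \prec \cdots$ in $\phi_k^{-1}(L)$ with the $u_i$ eventually above level $k$, and at each $u_i$ use \autoref{neighbors in Fk} together with the $\PSL(2,\bz)$-translation trick from the proof of \autoref{tree} to exhibit two distinct successor-branches of $u_i$ that are abstractly isomorphic as rooted graphs and that live in disjoint regions of the plane (hence can be independently swapped). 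Transporting the basepoint-picture along $\PSL(2,\bz)$ makes each $u_i$ "look like" $0/1$ locally, so the numerator-negation swap at $0/1$ conjugates to a legitimate branch-swap at $u_i$. This yields $S=\{u_i\}_{i\in\bn}$ with the required independence, and the subset construction finishes the proof.

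The main obstacle I anticipate is verifying the \emph{independence} of the swaps rigorously: I must ensure that the branch at $u_i$ that I swap does not contain $u_j$ for $j\ne i$, and that the two swappable branches at a single $u_i$ are genuinely isomorphic as \emph{rooted} graphs (not merely that $u_i$ is a cut vertex with two sides). Planarity (\autoref{planarity}) and the order structure (\autoref{comparable}, \autoref{predecessors}) should pin down the combinatorial type of each outer branch enough to compare two of them --- e.g.\ both could be shown isomorphic to a fixed "universal" rooted tree-like graph determined only by $k$ and the residue class, via the reduction strategy in the proof of \autoref{connected} --- but making this comparison precise, rather than hand-wavy, is where the real work lies. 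An alternative that sidesteps exhibiting explicit isomorphic branches: show directly that $G$ admits a nontrivial automorphism fixing a specified cut vertex $u$ and fixing everything outside one branch of $u$ (the numerator-negation map does this at $v=0/1$ when $G^+\cong G^-$; otherwise restrict attention to the two-element "predecessor pair" structure of \autoref{predecessors}(2) in the odd case or to swapping two successor subtrees), then use commuting copies of such "local" automorphisms supported on disjoint branches to generate a group containing $(\bz/2\bz)^{\bn}$, which is already uncountable.
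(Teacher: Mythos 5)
Your proposal contains a genuine gap, and it is the one you flag yourself: everything hinges on producing infinitely many ``independent'' sites, each carrying \emph{two isomorphic rooted branches} that can be swapped, and this existence statement is never established. The pieces you do verify are fine --- the numerator-negation map $x/y\mapsto -x/y$ does preserve $d_\cf$ and $\phi_k^{-1}(L)$ when $L=[(0,1)]$, fixes $0/1$, and (by the proof of \autoref{cut}) interchanges the positive and negative sides --- but this yields a \emph{single} involution of $\phi_k^{-1}(L)$, hence only a $\bz/2\bz$'s worth of automorphisms. To upgrade to uncountability you propose an ascending chain $u_0\prec u_1\prec\cdots$ with two abstractly isomorphic successor-branches at each $u_i$, swappable independently; but conjugating the negation involution by an element of $\PSL(2,\bz)$ carrying $u_i$ to $0/1$ produces a \emph{global} involution of a (possibly different) line-preimage, not an automorphism supported on two branches at $u_i$, and you never show that a matched pair of components at $u_i$ can be chosen avoiding the other $u_j$ and the other chosen pairs, nor that two successor-branches are isomorphic as rooted graphs (the appeal to a ``universal rooted tree-like graph'' is exactly the unproved step, and for $k$ odd the branches are not even trees). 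So the plan, as written, does not close.

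The paper avoids all of this by working at a \emph{single} cut vertex $v=0/1$ and supplying the missing infiniteness directly: the parabolic $A=\left(\begin{smallmatrix}1&0\\1&1\end{smallmatrix}\right)$ fixes $v$ and preserves $\phi_k^{-1}(L)$, and its powers push any fixed neighbor $w_0$ of $v$ to both the positive and negative sides of $v$. Combined with the sign argument from \autoref{cut}, this shows that suitable powers $B^n w$ lie in pairwise distinct components of $G\setminus\{v\}$, and these components are pairwise isomorphic because they differ by powers of $B$ (which fix $v$, so the isomorphisms respect the attachment to $v$). Arbitrary permutations of this infinite family of isomorphic components, extended by the identity elsewhere, then give uncountably many automorphisms at once. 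If you want to salvage your route with minimal change, you could keep your negation involution $\nu$ and instead prove (by essentially the paper's $B^n w$ argument) that $G\setminus\{v\}$ has infinitely many positive components $C$; then for each subset of these, applying $\nu$ on $C\cup\nu(C)$ and the identity elsewhere is an automorphism, giving $2^{\aleph_0}$ of them --- but some device like the parabolic stabilizer of $v$ is needed to get that infiniteness, and it is precisely what your write-up is missing.
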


\begin{proof}
We claim that for each vertex $v$ in a component $G\subset \cf_k$, there are infinitely many components of $G\setminus \{v\}$ that are isomorphic. The claim follows as the set of permutations of $\bn$ is uncountable.

Again, by transitivity of $\Aut(\cf)\curvearrowright V(\cf)$ we may assume that $v=0/1$. Note that the cyclic subgroup of $\PSL(2,\bz)$ generated by 
$A=\left(
\begin{array}{cc}
1 & 0 \\ 
1 & 1
\end{array}
\right)$ 
fixes $v$.
Moreover, for any $w\ne v$, the sequence $\{A^nw \}$ approaches $v$ from the positive side and $\{A^{-n}v\}$ approaches $v$ from the negative side.

Now choose a neighbor $w_0 \sim v$. Choosing appropriate powers of $A$, we find that there are $n_1,n_2\in\bn$ so that $A^{n_1}w_0<0$ and $A^{n_2}w_0>0$. Let $w=A^{n_1}w_0$ and $B=A^{n_2-n_1}$ (so that $A^{n_2}w_0 = Bw$). 
By the proof of \autoref{cut}, we find that $w$ and $Bw$ are in distinct components of $G\setminus \{v\}$, and similarly $B^nw$ and $B^{n+1}w$ are in distinct components of $G\setminus \{v\}$ for each $n\in \bz$. It follows that $\{B^nw\}$ picks out infinitely many components of $G\setminus \{v\}$: applying appropriate powers of $B$ to a pair $B^{m_1}w$ and $B^{m_2}w$, we may arrange that one becomes positive and the other negative. Moreover, all such components are isomorphic, as they differ by powers of $B$.
\end{proof}

\begin{rem}
It is interesting to note that there is a rich family of graph automorphisms of $\Aut(\cf_k)$ that preserves connected components of $\cf_k$ when $k=p^\ell$: The \emph{level $k$ congruence group} of $\PSL(2,\bz)$ is the kernel of $\PSL(2,\bz) \to \PSL(2,\bz/k\bz)$, which induces a trivial action on the lines $\cl_k$.
\end{rem}

\bigskip 

\section{Cliques in \texorpdfstring{$\cf_{\leqslant k}$}{F<=k} via subgraphs of the dual graph to \texorpdfstring{$\cf$}{F}}
Agol observed that a collection of simple closed curves on the torus that pairwise intersect $\le k$ times would have pairwise distinct images under $\phi_p$ \cite[Lemma 8.2]{Agol}, where $p=p(k)$ is the smallest prime larger than $k$, from which a bound on the chromatic number is immediate. We include Agol's simple argument for completeness:

\begin{lem}
\label{coloring}
We have $\chi(\cf_{\leqslant k})\le 1+p(k)$.
\end{lem}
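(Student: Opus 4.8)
The plan is to use the reduction map $\phi_p \colon V(\cf) \to \cl_p$ with $p = p(k)$ as a proper coloring. Since $p$ is prime, $\cl_p$ is the projective line over the field $\bz/p\bz$, which has exactly $p+1$ elements, so $\phi_p$ assigns one of $1+p(k)$ colors to each vertex. The key claim is that this is a proper coloring of $\cf_{\leqslant k}$: if $v \sim w$ in $\cf_{\leqslant k}$, then $\phi_p(v) \ne \phi_p(w)$.

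To prove the claim, suppose toward a contradiction that $v \sim w$ in $\cf_{\leqslant k}$ but $\phi_p(v) = \phi_p(w)$. First I would apply an element of $\PSL(2,\bz)$ carrying $v$ to $0/1$; this is legitimate because the action is transitive on $V(\cf)$ and preserves both the edge relation in $\cf_{\leqslant k}$ (it preserves $d_\cf$ up to sign) and the fibers of $\phi_p$ (by \autoref{transitive lines}). After this normalization write $w = a/b$. The condition $v \sim w$ in $\cf_{\leqslant k}$ says $d_\cf(0/1, a/b) = |a| \le k$, with $|a| \ge 1$ since $w \ne v$. Meanwhile $\phi_p(0/1) = [(0,1)]$, and $\phi_p(a/b) = [(a,b)] = [(0,1)]$ forces $a \equiv 0 \pmod p$. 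But $1 \le |a| \le k < p$, so $a \not\equiv 0 \pmod p$, a contradiction. Hence $\phi_p$ is a proper coloring and $\chi(\cf_{\leqslant k}) \le 1 + p(k)$.

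The argument is essentially routine once the normalization is set up; the only point requiring care is checking that the $\PSL(2,\bz)$-action genuinely descends to the coloring, i.e.\ that moving $v$ to $0/1$ does not disturb the hypothesis $\phi_p(v) = \phi_p(w)$ — but this is exactly the content of \autoref{transitive lines}, which says the action descends to $\cl_p$, so $\phi_p(g \cdot u) = g \cdot \phi_p(u)$ and equality of colors is preserved. So there is no real obstacle here; the lemma is a direct consequence of the finiteness of $\cl_p$ together with the fact that $k < p(k)$.
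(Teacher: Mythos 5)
Your proof is correct and takes essentially the same approach as the paper: color the vertices by $\phi_{p(k)}$, note $|\cl_{p(k)}|=1+p(k)$, and observe that equal colors force the determinant pairing to be divisible by $p(k)>k$, which is incompatible with adjacency in $\cf_{\leqslant k}$. The only cosmetic difference is that the paper checks properness directly from the congruence $aq\equiv bp \pmod{p(k)}$ for arbitrary vertices, while you first normalize one vertex to $0/1$ via $\PSL(2,\bz)$ (using the equivariance from \autoref{transitive lines}); the two computations are the same in substance.
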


\begin{proof}
Suppose that $r>k$. We claim that $\phi_r:V(\cf) \to \cl_r$ gives a proper coloring of the vertices of $\cf_{\leqslant k}$. Taking $r=p(k)$ completes the proof by \autoref{counting lines}.

Suppose that $\phi_r(a/b)=\phi_r(p/q)$ for vertices $a/b,p/q\in V(\cf)$. This implies that $(p,q)\equiv \lambda(a,b)$ for $\lambda \in \left(\bz/r\bz\right)^*$, and $aq\equiv bp \modu r$ follows readily. In particular, we have $d_\cf(a/b,p/q) =nr$ for a non-negative integer $n$. But $d_\cf(v,w) $ is non-zero when $v\ne w$. Thus $d_\cf(a/b,p/q) \ge r > k$ and we conclude $a/b \not \sim p/q$.
\end{proof}

\begin{rem}
\label{improve Agol}
The proof above actually implies that $\chi(\cf_{\leqslant k})$ is at most $\min \{ | \cl_r | : r > k\}$. It is unclear where there exists $k$ for which this is smaller than $1+p(k)$. 
\end{rem}

Since $\omega(G) \le \chi(G)$ for all graphs $G$, it remains to find lower bounds for $\omega(\cf_{\leqslant k})$, i.e.~constructions of large cliques in $\cf_{\leqslant k}$.
Evidently, any collection $S$ of vertices of $\cf$ will determine a clique in $\cf_{\leqslant k}$ when $k$ is large enough—in particular, when $k$ is at least the maximum determinant pairing among elements of $S$. 
The challenge in finding good lower bounds for $\omega(\cf_{\leqslant k})$ is to make sure this `large' is small relative to the number of vertices of $S$. 

We now describe how to organize our search for families of vertices according to their relative positions in $\cf$. 
We start by recalling the relationship between the Farey graph $\cf$, the hyperbolic plane $\bh$, and continued fraction expansions.

\begin{figure}[h]
	\centering
	\vspace{.5cm}
	\begin{lpic}{dual-graph(,8cm)}
	\LARGE
		\lbl[]{10,15;$H_{0/1}$}
	\end{lpic}
	\vspace{.5cm}
	\caption{The dual graph $\cg$ to $\cf$, and a horocycle $H_{0/1}$.}
	\label{dual pic}
\end{figure}
\ \vspace{.5cm}\\

\subsection{The natural planar embedding of the Farey graph}
\label{cliques}
The index-two subgroup $\PSL(2,\bz)$ of $\Aut(\cf)$ is isomorphic to a subgroup of the isometries of the hyperbolic plane $\bh$. This is no accident: 
The Farey graph may be embedded in a compactification of the hyperbolic plane $\overline{\bh} = \bh \cup \partial_{\infty}\bh$, in which $V(\cf) $ is in natural correspondence with $\bq \cup \{\infty\} \subset \br \cup \{\infty\} \cong \partial_\infty \bh$ and edges are realized as hyperbolic geodesics between points of $\partial _\infty \bh$. 
We refer to this as the \emph{natural embedding} of $\cf$ in the plane. 
One manifestation of the usefulness of this point of view is the following:

The embedding $\cf \subset \overline{\bh}$ determines a tiling of $\bh$ by ideal hyperbolic triangles, and the dual graph $\cg$ to this tiling is a trivalent tree in $\bh$. 
The complementary regions of $\cg$ are approximate horoballs—the boundary of the complementary regions is piecewise geodesic rather than horocyclic—and each region accumulates onto a single point in $V(\cf)\subset \partial_\infty \bh$, the `center' of the horoball. 
We indicate the union of the edges of $\cg$ incident to the (approximate) horoball centered at $p/q$ by $H_{p/q}\subset \cg$.

Note that when $H_{p/q}$ and $H_{a/b}$ intersect we must have $p/q \sim a/b$, i.e.~these vertices have determinant pairing $1$. More generally, disjoint sets $H_{p/q}$ and $H_{a/b}$ are connected by a unique geodesic path in the tree $\cg$, and the planar embedding $\cg\subset \bh$ allows us to interpret this geodesic as a sequence of paths contained in horocycles, interspersed by left- and right-turns in $\cg$. We refer to the sequence of positive integers so obtained as a `left-right sequence' associated to the geodesic in $\cg$. In fact we have:

\begin{thm}
\label{intersection numbers}
Suppose that $(l_1,\ldots,l_m)$ is a left-right sequence for a geodesic from $H_{p/q}$ to $H_{a/b}$, for vertices $p/q,a/b\in V(\cf)$. 
Then the determinant $ d_\cf\left(  p /q,  a/ b\right)$ is given by the numerator of the continued fraction expansion
\[
l_1+\cfrac{1}{ l_2 + \cfrac{1}{\ldots +\cfrac{1}{l_m +1}}}
\]
\end{thm}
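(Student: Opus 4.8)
The plan is to set up a correspondence between geodesics in the dual tree $\cg$ and sequences of matrices in $\PSL(2,\bz)$, so that the combinatorial left--right data $(l_1,\ldots,l_m)$ translates into an explicit matrix product whose relevant entry is computed by the stated continued fraction. Concretely, I would first normalize using the $\PSL(2,\bz)$-action on $\cf$ (which is transitive on vertices, hence on the horoball regions $H_{p/q}$, and which acts on the dual tree $\cg$): place the initial horoball at $H_{1/0}$, i.e.\ arrange that $p/q = 1/0$. This reduces the claim to computing $d_\cf(1/0, a/b) = |a|$ for the endpoint $a/b$ obtained by following the prescribed turns, and the goal becomes identifying this numerator with the continued fraction $[l_1, l_2, \ldots, l_m+1]$.

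The key step is to track how the two elementary moves act. Traveling a distance $l$ along a horocycle $H_{p/q}$ before turning corresponds to post-composing with a power of the parabolic fixing $p/q$; in the normalized picture at $H_{1/0}$ this is $T^{l} = \left(\begin{smallmatrix} 1 & l \\ 0 & 1\end{smallmatrix}\right)$ (up to sign/orientation conventions), and a turn in $\cg$ corresponds to the matrix $S = \left(\begin{smallmatrix} 0 & -1 \\ 1 & 0 \end{smallmatrix}\right)$ (or $\left(\begin{smallmatrix} 0 & 1 \\ 1 & 0\end{smallmatrix}\right)$, depending on whether the turn is left or right — but since consecutive turns in the piecewise-geodesic description alternate, one checks the pattern is forced and a single $S$-type matrix suffices between consecutive $T$-blocks). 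Thus following the left--right sequence amounts to forming the product $M = T^{l_1} S\, T^{l_2} S \cdots S\, T^{l_m} S$ (with the final $S$ accounting for the last horoball $H_{a/b}$ itself), and $H_{a/b}$ is the image of $H_{1/0}$ under $M$, so $a/b = M\cdot (1/0)$ and $d_\cf(1/0,a/b)$ is the absolute value of the top-left entry of $M$.

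It then remains a routine induction — which I would only sketch — that the top-left entry of $T^{l_1}S\,T^{l_2}S\cdots T^{l_m}S$ equals the numerator $h_m$ of the continued fraction $l_1 + \cfrac{1}{l_2 + \cfrac{1}{\cdots + \cfrac{1}{l_m+1}}}$. This is the standard fact that numerators and denominators of continued-fraction convergents are generated by exactly such $2\times 2$ products: writing $\left(\begin{smallmatrix} l_i & 1 \\ 1 & 0\end{smallmatrix}\right) = T^{l_i}S$ up to sign, the product $\prod_i \left(\begin{smallmatrix} l_i & 1 \\ 1 & 0 \end{smallmatrix}\right)$ has entries $\left(\begin{smallmatrix} h_m & h_{m-1} \\ k_m & k_{m-1}\end{smallmatrix}\right)$, and the trailing factor $S = T^{0}S$ followed by one more $S$ — equivalently absorbing a final "$+1$" — accounts for the $l_m+1$ in the last denominator of the stated fraction. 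I would record the base cases $m=1,2$ explicitly and then induct on $m$.

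The main obstacle I anticipate is bookkeeping the orientation and sign conventions: pinning down precisely which parabolic and which reflection/rotation a "left" versus a "right" turn corresponds to, verifying that the left--right sequence as defined (positive integers counting consecutive same-direction turns) matches the exponents $l_i$ in the matrix product, and confirming that the endpoint horoball contributes exactly the final $+1$ rather than an off-by-one shift. Since the statement is about determinants (absolute values), the $\pm$ ambiguities are harmless, but the combinatorial identification of turn-counts with continued-fraction partial quotients — and in particular the asymmetry of the last term $l_m + 1$ versus the "bare" $l_i$ for $i<m$ — is the delicate point and deserves a careful figure.
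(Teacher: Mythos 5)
Your overall strategy (normalize by the $\PSL(2,\bz)$-action, translate the left--right data into a word in elementary matrices, and invoke the classical fact that such products have continued-fraction convergents as entries) is a legitimate route --- it essentially amounts to reproving the fact that the paper simply outsources: the paper normalizes the \emph{other} endpoint to $1/0$, notes $d_\cf(1/0,p/q)=q$, and cites Hatcher's Theorem 2.1, which is exactly the continued-fraction/Farey-strip correspondence you are sketching. But two of the concrete steps you assert are wrong, and they are precisely the steps the theorem hinges on. First, $d_\cf(1/0,a/b)=\left|\det\left(\begin{smallmatrix}1&a\\0&b\end{smallmatrix}\right)\right|=|b|$, the \emph{denominator}, not $|a|$; so even granting your model $a/b=M\cdot(1/0)$, the relevant quantity is the bottom-left entry of $M$, not the top-left one, and your identification of the answer with the convergent numerator no longer follows from your own setup. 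Relatedly, the normalized endpoint is \emph{not} the value of the continued fraction $[l_1,\ldots,l_m+1]$; only its denominator equals that fraction's numerator (in the paper's example, the endpoint normalizes to $-3/7$ while the continued fraction is $7/3$), so the assertion ``$H_{a/b}$ is the image of $H_{1/0}$ under $M$'' with $M$ the convergent product conflates two different vertices and needs genuine justification.

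Second, and more seriously, the matrix model itself is wrong as stated: since consecutive runs of the geodesic pivot around \emph{different} Farey vertices and turn in \emph{opposite} directions, the correct word alternates conjugate parabolics, e.g.\ $\left(\begin{smallmatrix}1&l_1\\0&1\end{smallmatrix}\right)\left(\begin{smallmatrix}1&0\\l_2&1\end{smallmatrix}\right)\left(\begin{smallmatrix}1&l_3\\0&1\end{smallmatrix}\right)\cdots$ (equivalently, one inserts the determinant $-1$ swap $\left(\begin{smallmatrix}0&1\\1&0\end{smallmatrix}\right)$ between blocks); it is exactly this alternation that produces the \emph{plus} continued fraction $l_1+1/(l_2+\cdots)$. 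Your claim that ``a single $S$-type matrix suffices'' because the turns alternate has it backwards: with $S=\left(\begin{smallmatrix}0&-1\\1&0\end{smallmatrix}\right)$ inserted uniformly, the product $T^{l_1}ST^{l_2}S\cdots$ computes the \emph{minus} (Hirzebruch--Jung) continued fraction $l_1-1/(l_2-\cdots)$, which has a different numerator. Concretely, for the paper's own example with left--right sequence $\{2,2\}$ one has $d_\cf=7$ and $[2,3]=7/3$, whereas $T^2ST^3S=\left(\begin{smallmatrix}5&-2\\3&-1\end{smallmatrix}\right)$, whose top-left entry is $5=2-\tfrac13$ written over $3$; so the ``$\pm$ ambiguities'' are not harmless --- the sign pattern is exactly the difference between the two kinds of continued fractions. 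To repair the argument you must pin down the turn-to-matrix dictionary correctly (alternating parabolics or a reflection between blocks), prove --- not assume --- that the geodesic's endpoint data is the image of the initial data under that word, and then read off the correct matrix entry; at that point the induction you defer is indeed standard.
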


The action $\PSL(2,\bz) \curvearrowright V(\cf)$ is transitive, $d_\cf$-preserving, and induces an automorphism that preserves left-right sequences of geodesics in $\cg$. Thus for the proof of this theorem it suffices to check the claim when $a/b=1/0$. In this case, $d_\cf(1/0,p/q)=q$, and the claim follows from \cite[Thm.~2.1, p.~31]{Hatcher}.
We encourage readers to consult \cite{Hatcher} for a beautiful detailed exposition.

\begin{figure}
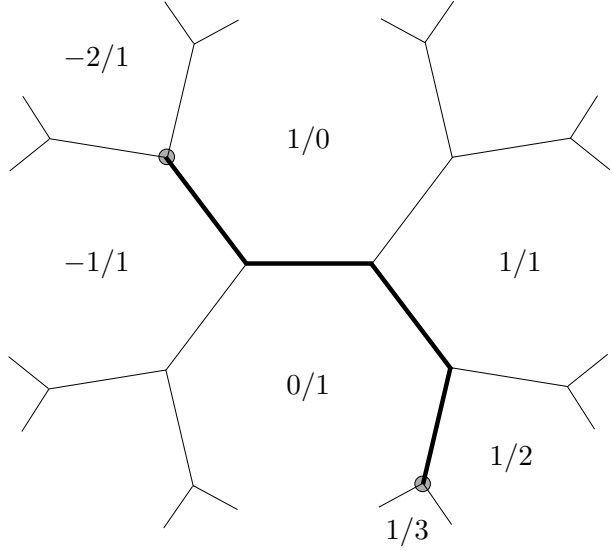

	\centering
	\vspace{.2cm}
	\begin{lpic}{example-intersection(8cm)}
		\lbl[]{170,80;$0/1$}
		\lbl[]{170,220;$1/0$}
		\lbl[]{290,150;$1/1$}
		\lbl[]{50,150;$-1/1$}
		\lbl[]{50,266;$-2/1$}
		\lbl[]{285,40;$1/2$}
		\lbl[]{226,-2;$1/3$}
	\end{lpic}
	\vspace{.2cm}
	\label{intersection pic}
	\caption{The computation of $d_\cf(1/3,-2/1)=7$.}
	\label{Farey pics} 
\end{figure}

\begin{ex}
\label{intersection ex}
The geodesic from $H_{-2/1}$ to $H_{1/3}$ is described by both left-right sequences $\{2,2\}$ and $\{1,1,2\}$ (see \autoref{Farey pics}); as well the geodesic from $H_{1/3}$ to $H_{-2/1}$ is given by $\{3,1\}$ and $\{1,2,1\}$. Note that we have 
\begin{align*}
\boxed{2} + \frac 1 {\boxed{2} +1}  &=  \frac {\boxed{7}} 3 \ \ , \hspace{1.1cm} \boxed{1} + \frac 1 {\boxed{1}+ \frac 1 {\boxed{2}+1} } = \frac {\boxed{7}} 4~, \\
\boxed{3} + \frac 1 {\boxed{1} +1}  &=  \frac {\boxed{7}} 2 \ \ \text{, and } \ \ \boxed{1} + \frac 1 {\boxed{2}+ \frac 1 {\boxed{1}+1} } = \frac {\boxed{7}} 5~,
\end{align*}
and $d_\cf(-2/1,1/3)=7$.
\end{ex}

The following observation is immediate from the calculation of \autoref{intersection numbers}.

\begin{lem}
\label{non-decreasing int}
If $\{a_1,\ldots,a_m\}$ and $\{b_1,\ldots,b_m\}$ are two left-right sequences with $a_i\le b_i$ for each $i$, then the intersection number determined by $\{a_i\}$ is at most that determined by $\{b_i\}$.
\end{lem}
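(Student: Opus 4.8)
The plan is to make the claim follow directly from the continued-fraction formula in \autoref{intersection numbers} by an elementary monotonicity argument in the partial numerators. Recall that for a left-right sequence $(l_1,\ldots,l_m)$ the associated intersection number is the numerator $N(l_1,\ldots,l_m)$ of the finite continued fraction
\[
l_1+\cfrac{1}{l_2+\cfrac{1}{\ldots+\cfrac{1}{l_m+1}}}~.
\]
So it suffices to show that, for positive integers, the function $(l_1,\ldots,l_m)\mapsto N(l_1,\ldots,l_m)$ is non-decreasing in each coordinate $l_i$; applying this one coordinate at a time (changing $a_i$ up to $b_i$, $i=1,\ldots,m$, along a chain $\{a_1,\ldots,a_m\}\le \{b_1,a_2,\ldots,a_m\}\le \cdots \le \{b_1,\ldots,b_m\}$) then gives the result.

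First I would set up the standard recursion for continued-fraction numerators. Writing the convergents from the bottom up, let $p_m/q_m$ be the value of the tail $l_m+1$ written as a fraction in lowest terms, and more generally let $p_j/q_j$ be the value of $l_j + 1/(l_{j+1}+\cdots)$; then one has the matrix identity
\[
\begin{pmatrix} p_j \\ q_j \end{pmatrix}
= \begin{pmatrix} l_j & 1 \\ 1 & 0\end{pmatrix}\begin{pmatrix} p_{j+1}\\ q_{j+1}\end{pmatrix},
\]
with the convention that the innermost term contributes $l_m+1$, and $N(l_1,\ldots,l_m)=p_1$. Since all the $l_j$ are $\ge 1$, an easy induction from $j=m$ down to $j=1$ shows that every $p_j$ and every $q_j$ is a positive integer, and in fact $p_j\ge q_j>0$. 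This positivity is the only structural input one needs.

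Next, fix an index $i$ and consider increasing $l_i$ by $1$. The tails $p_j,q_j$ for $j>i$ are unchanged. At level $i$, replacing $l_i$ by $l_i+1$ replaces $p_i=l_ip_{i+1}+q_{i+1}$ by $p_i+p_{i+1}$ and $q_i=p_{i+1}$ by $q_i$, so both $p_i$ and $q_i$ are non-decreasing (here using $p_{i+1}\ge 0$). Then propagating upward via the same recursion $p_{j-1}=l_{j-1}p_j+q_j$, $q_{j-1}=p_j$ and using that the multipliers $l_{j-1}\ge 1$ are non-negative and that $p_j,q_j$ only went up, we get by downward induction on $j$ that $p_1=N(l_1,\ldots,l_m)$ is non-decreasing. (When $i=1$ there is nothing to propagate.) Chaining the $m$ single-coordinate steps yields the lemma.

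I do not anticipate a genuine obstacle here: the entire content is the positivity of the intermediate numerators and denominators together with the observation that the continued-fraction recursion is monotone in its non-negative inputs. The only thing requiring a little care is bookkeeping the $+1$ in the last entry $l_m+1$ (so that the base of the induction is $l_m+1\ge 2>0$ rather than a bare $l_m$) and making sure the convention for the one-term case $m=1$ matches \autoref{intersection numbers}; once that is pinned down the argument is the routine "monotonicity of convergents in the partial quotients" fact, so in the write-up I would likely just state it as immediate from \autoref{intersection numbers}, exactly as the lemma's preamble suggests.
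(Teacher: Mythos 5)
Your argument is correct and follows exactly the route the paper intends: the paper simply declares the lemma ``immediate'' from the continued-fraction formula of \autoref{intersection numbers}, and your write-up supplies the standard verification (positivity of the numerators and denominators in the bottom-up recursion, hence coordinatewise monotonicity of the numerator in the partial quotients, chained one entry at a time). No gaps; this is just the detailed version of the paper's one-line justification.
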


Given a finite subgraph $K\subset \cg$, consider the associated subset $V_K\subset V(\cf)$ determined by 
\[
V_K = \left\{ \; \frac p q  \; : \; H_{p/q} \cap K \text{ is neither empty nor a singleton} \; \right \},
\]
and we let 
\[
I(K) = \max \left\{ d_\cf\left( \frac p q, \frac a b\right ) : \frac p q, \frac a b \in V_K\right\} .
\]
See \autoref{subgraph example} for an example. \bigskip\\

\begin{figure}
	\centering
	\includegraphics[width=6cm]{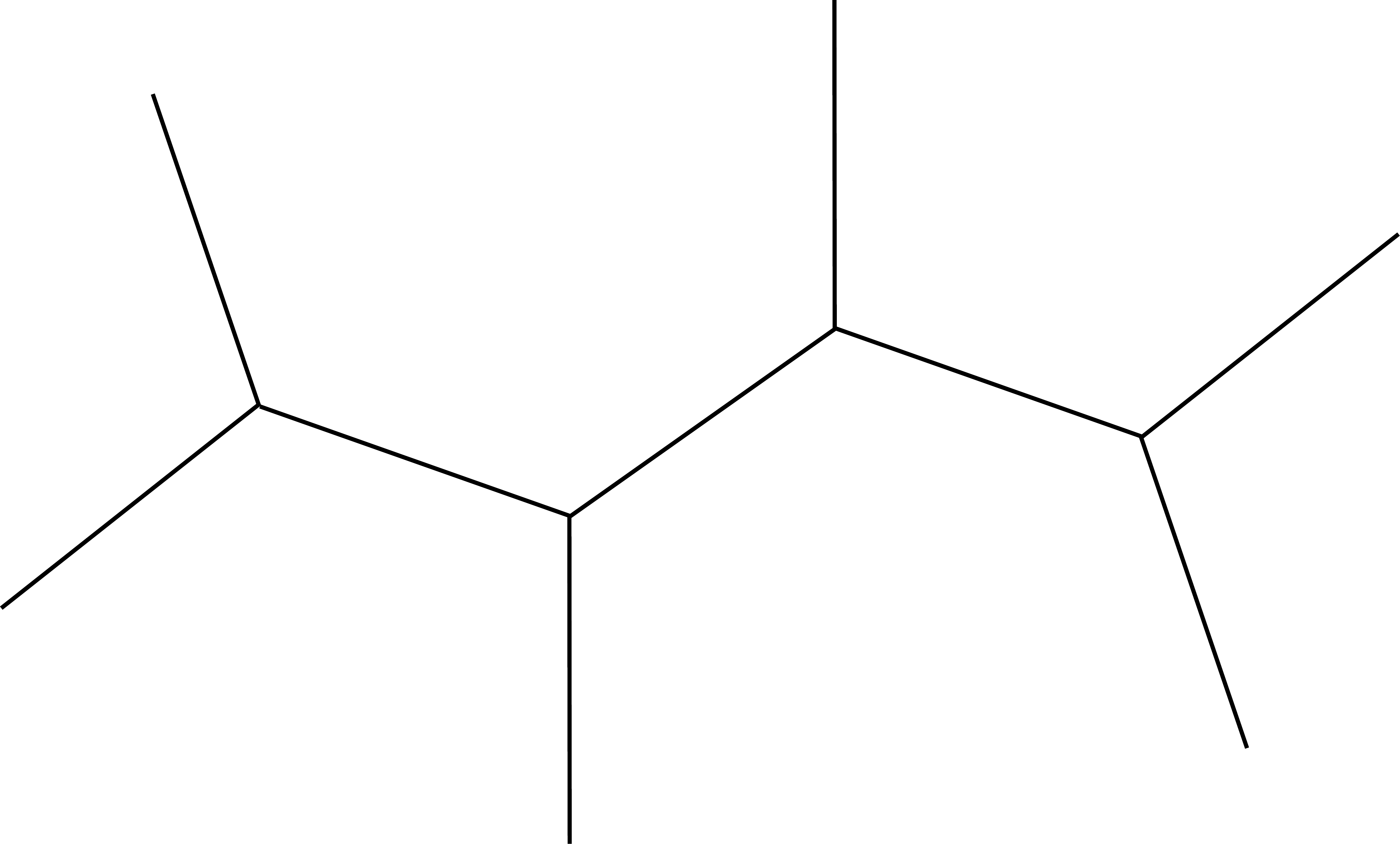}
	\caption{A subgraph $K$ of $\cg$ with $I(K) = 5$ and $|V_K|= 6$.}
	\label{subgraph example}
\end{figure}

We now describe several explicit constructions of finite subgraphs of $\cg$. See \autoref{subgraph pics} for visual presentations.

\begin{definition}
\label{families}
Let $P_n$ indicate an embedded path of length $n$ inside a horocycle of $\cg$.
\begin{enumerate}
\item Let $R_n$ be the union of $P_n$ with the $n-1$ edges of $\cg$ incident to non-endpoint vertices of $P_n$.
\item There is a vertex of $R_{2n}$ that is determined by the property that it is the unique fixed endpoint of an involution of $R_{2n}$. Let $S_n$ be the union of $R_{2n}$ with the two edges of $\cg$ incident to this endpoint.
\end{enumerate}
\end{definition}

\begin{lem}
\label{clique construction1}
We have $I(R_n) = n-1$, $|V_{R_n}| = n+1$, $I(S_n) = 2n-1$, and $|V_{S_n}| = 2n+2$.
\end{lem}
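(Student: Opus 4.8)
The strategy is to compute each of the four quantities by exhibiting an explicit left-right sequence for the relevant geodesic in $\cg$ and applying \autoref{intersection numbers}, then counting the horoballs that meet the subgraph nontrivially. I would begin with $R_n$. The defining path $P_n$ lies in a single horocycle of $\cg$, so it contributes a `run' of $n$ consecutive edges in that horocycle, and the $n-1$ edges incident to its interior vertices are exactly the turns one takes when traversing the tree from one extremal horoball of the configuration to the other. The two endpoints of $P_n$ together with the $n-1$ horoballs `hanging off' the interior vertices give $|V_{R_n}| = n+1$. For the intersection number $I(R_n)$, the two horoballs realizing the maximum are the two endpoints of $P_n$: the geodesic between them in $\cg$ runs along $P_n$ for $n-1$ edges, so it has left-right sequence $(1,1,\ldots,1)$ of length $n-1$, and by \autoref{intersection numbers} the determinant is the numerator of the continued fraction $[1;1,\ldots,1,2]$ with $n-1$ ones, which is $n-1$ (the Fibonacci-type recursion with all partial quotients $1$ collapses here because of the $+1$ in the last term — one checks by induction that this numerator equals $n-1$). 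One must also verify that no other pair in $V_{R_n}$ beats this, which follows from \autoref{non-decreasing int}: every other geodesic between horoballs in $V_{R_n}$ has a left-right sequence that is entrywise $\le (1,\ldots,1)$ after accounting for length, hence a smaller or equal numerator.

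\textbf{The case of $S_n$.} Here $S_n$ is built from $R_{2n}$ by adjoining the two edges of $\cg$ at the distinguished endpoint $v$ fixed by the involution of $R_{2n}$; the involution swaps the two halves of the path $P_{2n}$, so $v$ is the midpoint vertex. Adjoining these two edges creates one new horoball meeting $S_n$ nontrivially (the one centered at the `far' vertex reached by the new edges is a singleton intersection and does not count, but the configuration of the two new edges together with the horocyclic arcs of $P_{2n}$ through $v$ forces exactly one additional horoball into $V_{S_n}$), giving $|V_{S_n}| = |V_{R_{2n}}| + 1 = (2n+1)+1 = 2n+2$. For $I(S_n)$, the extremal pair is now an endpoint of $P_{2n}$ together with the new horoball produced at the center: the geodesic between them traverses half of $P_{2n}$, that is $n$ horocyclic edges, then takes a turn onto one of the new edges, contributing a final entry; the resulting left-right sequence is $(1,\ldots,1,n+1)$ or equivalently $(1,\ldots,1,n,1)$ of the appropriate length, and \autoref{intersection numbers} gives numerator $2n-1$ after evaluating the continued fraction. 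Again \autoref{non-decreasing int} confirms that this is the maximum over all pairs in $V_{S_n}$.

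\textbf{Main obstacle.} The routine part is the continued-fraction arithmetic, which is a short induction. The genuine care is needed in two places: first, correctly identifying which horoballs have $H_{p/q}\cap K$ "neither empty nor a singleton" — one must be careful that the horoballs hanging off the very endpoints of $P_n$ (and off the tips of the newly-added edges in $S_n$) meet $K$ in a single edge of $\cg$ versus a single vertex, which changes the count — and second, confirming that the extremal pair I have guessed really does maximize $d_\cf$ over $V_K$, rather than some pair of interior horoballs whose connecting geodesic in $\cg$ happens to pick up a large turning number. Both points are handled by drawing the configuration carefully (as in \autoref{subgraph pics}) and invoking the monotonicity of \autoref{non-decreasing int}, but they are where an incorrect proof would go wrong. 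I would organize the write-up as: (1) fix the explicit picture of $P_n$ inside a horocycle and label the incident edges; (2) read off $V_{R_n}$ and its cardinality; (3) compute the left-right sequence of the diametral geodesic and evaluate the continued fraction; (4) apply \autoref{non-decreasing int} for maximality; (5) repeat steps (1)--(4) for $S_n$, noting the one extra horoball and the modified final entry of the left-right sequence.
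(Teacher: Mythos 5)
Your overall strategy (identify $V_K$, write down the extremal left-right sequences, evaluate via \autoref{intersection numbers}, and use \autoref{non-decreasing int} for maximality) is the same as the paper's, but the sequences you write down are wrong, and the error is not cosmetic. A left-right sequence records the \emph{lengths of the horocyclic runs between turns}: a geodesic in $\cg$ that travels $j$ consecutive edges inside a single horocycle contributes one entry equal to $j$, not a string of $j$ ones. For $R_n$ the extremal pair is not ``the two endpoints of $P_n$'' (those are vertices of the tree $\cg$, not elements of $V(\cf)$); it is the two horoballs adjacent to the big horoball across the first and last edges of $P_n$, and the geodesic between them runs along $P_n$ with no turns, so its left-right sequence is the single entry $\{n-2\}$, whose continued fraction $(n-2)+1$ has numerator $n-1$. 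Your sequence $(1,1,\ldots,1)$ of length $n-1$ would instead give a Fibonacci-type numerator: already for $n=3$ the sequence $(1,1)$ evaluates to $1+\tfrac{1}{1+1}=\tfrac32$, numerator $3\neq 2$, and for $n=4$ one gets $\tfrac53$, numerator $5\neq 3$. The ``induction'' you invoke to claim the all-ones numerator equals $n-1$ is false, so your value of $I(R_n)$ is only right by assertion. Your count of $|V_{R_n}|$ also rests on a shaky identification (2 ``endpoint'' horoballs plus $n-1$ ``hanging'' ones); the clean count is: the one large horoball containing $P_n$, plus one small horoball for each of the $n$ edges of $P_n$, giving $n+1$.

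The $S_n$ case compounds the problem. In \autoref{families} the distinguished vertex is the unique \emph{endpoint} of $R_{2n}$ fixed by the involution, i.e.\ the far tip of the middle spoke, not the midpoint vertex of $P_{2n}$ as you assert --- the path midpoint is already trivalent in $R_{2n}$, so there would be no edges of $\cg$ left to adjoin there. With the correct picture, the sets of left-right sequences between non-neighboring horoballs are $\{\{i\}:1\le i\le 2n-2\}$ together with $\{\{j,1\}:1\le j\le n-1\}$, and the two extreme cases $\{2n-2\}$ and $\{n-1,1\}$ both give numerator $2n-1$; your proposed sequence $(1,\ldots,1,n+1)$ again misencodes a horocyclic run as a string of ones and would produce a far larger numerator than $2n-1$. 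The count $|V_{S_n}|=2n+2$ you reach is correct (the two added edges create exactly one new horoball meeting $S_n$ in more than a point), but the computation of $I(S_n)$, like that of $I(R_n)$, needs to be redone with correctly encoded left-right sequences before \autoref{non-decreasing int} can be applied.
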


\begin{proof}
The left-right sequences between non-neighboring horoballs incident to $R_n$ are given by $\{\{i\}:1\le i\le n-2\}$. 
The extreme scenario is evidently $\{n-2\}$, so $I(R_n) = n-1$ by \autoref{non-decreasing int}.

For $S_n$, a set of left-right sequences for pairs of non-neighboring horoballs is given by $\{\{i\}:1\le i\le 2n-2\} \cup \{ \{j,1\}: 1\le j\le n-1\}$. 
It is not hard to see that both extreme cases give $I(S_n)=2n-1$.
\end{proof}

The third construction is the most involved. Start with $P_{7n-1}$, and number the non-endpoint vertices of $P_{7n-1}$ in order $1,\ldots,7n-2$. Now decompose these vertices into the sets
\begin{align*}
J_1 & = \{1,2,\ldots,2n-1\} \cup \{ 5n, 5n+1, \ldots, 7n-2\} , \\
J_2 & = \{2n, 2n+1, \ldots, 3n-1\} \cup \{ 4n, 4n+1, \ldots, 5n-1\} , \text{ and} \\
J_3 & = \{ 3n, 3n+1, \ldots, 4n-1\}.
\end{align*}
For each vertex $v\in J_1$, add the unique edge of $\cg \setminus P_{7n-1}$ incident to $v$. 
For each vertex $v\in J_2$, add the three edges of $\cg \setminus P_{7n-1}$ that are within distance one of $v$. 
Finally, for each $v\in J_3$, add the seven edges of $\cg \setminus P_{7n-1}$ that are within distance two of $v$. 
We refer to the result as $T_n$.

\begin{lem}
\label{clique construction2}
We have $I(T_n) = 12n-4$, and $|V_{T_n}| = 12n$.
\end{lem}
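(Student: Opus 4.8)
The plan is to compute $|V_{T_n}|$ and $I(T_n)$ directly from the definition of $T_n$, using \autoref{intersection numbers} and \autoref{non-decreasing int} to translate left--right sequences into determinant pairings.

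First I would count $|V_{T_n}|$. The vertices of $V_{T_n}$ coming from the horocyclic path $P_{7n-1}$ are its $7n$ vertices (each lies on at least two edges of $T_n$ once we have added the off-path edges, except possibly the two endpoints of $P_{7n-1}$, which I must check are each incident to an added edge so that $H_{p/q}\cap T_n$ is not a singleton). The remaining vertices of $V_{T_n}$ are the new horoball centers reached by the added edges off of $P_{7n-1}$: a vertex in $J_1$ contributes one new neighbor, a vertex in $J_2$ contributes neighbors along a length-two detour, and a vertex in $J_3$ a length-four detour. Careful bookkeeping—noting which of these ``detour'' vertices are shared between adjacent vertices of the same $J_i$, and between $J_2$ and $J_3$ blocks—should yield exactly $12n$ after cancellation; I would organize this as $7n$ (on the path) plus the contributions of $|J_1|=4n-1$, $|J_2|=2n$, and $|J_3|=n$ detour-vertices, minus the overcounted shared vertices.

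Next I would compute $I(T_n)$. As in the proof of \autoref{clique construction1}, the determinant pairing between two horoballs $H_{p/q}, H_{a/b}$ incident to $T_n$ is the numerator of the continued fraction of a left--right sequence read off from the geodesic in $\cg$ joining them, and by \autoref{non-decreasing int} the maximum is attained at the ``extreme'' pair. Travelling along $P_{7n-1}$ from a horoball attached in the far-left end ($J_1$) to one attached at the far-right end ($J_1$), the geodesic passes through the three blocks: the stretches over $J_2$ contribute turns that insert a sub-sequence of the form $\{j,1\}$ (a length-one detour), and the stretch over $J_3$ contributes turns inserting a sub-sequence of the form $\{j,1,1,1\}$ (a length-two detour). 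The widths were chosen ($2n-1$, $n$, $n$, $n$, $n$, $2n-1$ for the six stretches) precisely so that the worst-case left--right sequence evaluates, via the continued fraction of \autoref{intersection numbers}, to numerator $12n-4$; I would exhibit this extremal sequence explicitly and evaluate its continued fraction, then argue by \autoref{non-decreasing int} (and by checking the finitely many ``types'' of competing pairs, as in \autoref{clique construction1}) that no other pair exceeds it.

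The main obstacle I expect is the bookkeeping in the second half: correctly reading off the left--right sequence of the geodesic between the two extreme horoballs as it crosses the $J_2$ and $J_3$ blocks—keeping track of exactly how many left- and right-turns each added ``flap'' of edges forces, and in what order—and then verifying that the resulting continued fraction numerator is exactly $12n-4$ rather than an off-by-a-bounded-amount expression. The counting of $|V_{T_n}|$ is comparatively routine once the shared-vertex overlaps are enumerated, but it too requires care at the boundaries between the $J_i$ blocks and at the two endpoints of $P_{7n-1}$.
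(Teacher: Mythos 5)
Your overall strategy is the same as the paper's: reduce, via \autoref{non-decreasing int}, to finitely many extreme left--right sequences between horoballs incident to $T_n$, and evaluate their numerators via \autoref{intersection numbers}. But the step you defer as ``bookkeeping''---identifying and evaluating the extremal sequences---is the entire content of the lemma, and the guidance you give for it would fail. The pair of horoballs attached at the two far ends of $P_{7n-1}$ is \emph{not} extremal: their geodesic runs along the single horocycle containing the path, with left--right sequence $\{7n-3\}$ and numerator only $7n-2<12n-4$. The maximum is attained by pairs in which at least one endpoint is a horoball hanging off a $J_2$- or $J_3$-flap; for example the sequences $\{4n-2,1,1\}$ and $\{2,3n-2,1\}$ give continued fractions $(12n-4)/3$ and $(12n-4)/(6n-3)$. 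The paper's proof consists precisely of listing the candidate extremes $\{7n-3\}$, $\{5n-2,1\}$, $\{4n-2,2\}$, $\{4n-2,1,1\}$, $\{2,3n-2,1\}$, $\{2,2n-2,2\}$, $\{2,2n-2,1,1\}$, $\{2,1,n-2,2\}$, $\{2,1,n-2,1,1\}$, $\{3,n-2,2\}$, $\{3,n-2,1,1\}$ and checking their numerators; without such an enumeration you have neither the upper bound $I(T_n)\le 12n-4$ nor the correct witness for the lower bound, since your proposed configuration only yields $7n-2$. Two further inaccuracies in the setup: the path has five horocyclic stretches of widths $2n-1,n,n,n,2n-1$ (your six widths sum to $8n-2\neq 7n-2$), and a $J_3$-flap enters the sequences through short blocks such as $\{2,1,\cdot\}$, $\{3,\cdot\}$, $\{\cdot,1,1\}$, $\{\cdot,2\}$, not through a block of the form $\{j,1,1,1\}$.

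On the vertex count, recall that elements of $V_{T_n}$ are horoball centers $p/q$ with $H_{p/q}\cap T_n$ larger than a point, not vertices of the tree $\cg$. The correct tally is: $1$ for the horoball whose horocycle contains $P_{7n-1}$, plus $7n-1$ outer horoballs along the path (total $7n$, which coincides with your number only by accident), plus nothing from the $J_1$-edges (each such edge separates two outer horoballs already counted), plus one new center per $J_2$-flap and three per $J_3$-flap, giving $7n+2n+3n=12n$. Your accounting (``$7n$ vertices of the path,'' $|J_1|=4n-1$ rather than $4n-2$, one new neighbor per $J_1$-vertex) starts from the wrong objects and the wrong counts, though this half is indeed routine once set up correctly.
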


\begin{proof}
By \autoref{non-decreasing int}, it is enough to check the extreme left-right sequences for pairs of non-neighboring horoballs incident to $T_n$.
It is not hard to see that such extremes are given by the left-right sequences
\begin{align*}
& \{ 7n-3 \} , \{ 5n-2,1\}, \{4n-2,2\}, \{4n-2,1,1\},  \\
& \{2,3n-2,1\}, \{2,2n-2,2\}, \{2,2n-2,1,1\},  \\
& \{2,1,n-2,2\}, \{2,1,n-2,1,1\}, \{3,n-2,2\}, \{3,n-2,1,1\}~.
\end{align*}
These sequences determine continued fraction expansions with numerators $7n-2$, $10n-3$, $12n-5$, $12n-4$, $12n-4$, $12n-7$, $12n-5$, $9n-9$, $9n-6$, $9n-12$, and $9n-9$, respectively, so that $I(T_n)=12n-4$, as claimed. 
\end{proof}

\begin{prop}
\label{clique sizes}
We have:
\begin{enumerate}
\item For all $k$, $\omega\left( \cf_{\leqslant k} \right) \ge k+2$.
\label{case1}
\item For $k$ odd, $\omega \left( \cf_{\leqslant k} \right) \ge k+3$.
\label{case2}
\item For $k\equiv 8 \modu {12}$, $\omega \left( \cf_{\leqslant k} \right) \ge k+4$.
\label{case3}
\end{enumerate}
\end{prop}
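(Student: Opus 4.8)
The plan is to realize the required cliques inside $\cf_{\leqslant k}$ by exhibiting, for each case, a finite subgraph of the dual tree $\cg$ whose associated vertex set $V_K$ is large while $I(K)$ stays at most $k$. By the observations collected just before \autoref{families} — namely that $V_K$ determines a clique in $\cf_{\leqslant I(K)}$, hence in $\cf_{\leqslant k}$ whenever $I(K)\le k$ — a lower bound of the form $\omega(\cf_{\leqslant k})\ge |V_K|$ follows as soon as $I(K)\le k$. So the entire argument is a matter of plugging the correct parameters into the families $R_n$, $S_n$, $T_n$ already analyzed in \autoref{clique construction1} and \autoref{clique construction2}.

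For \eqref{case1}: take $K=R_n$ with $I(R_n)=n-1$ and $|V_{R_n}|=n+1$. Setting $n=k+1$ gives $I(R_{k+1})=k$ and $|V_{R_{k+1}}|=k+2$, which yields $\omega(\cf_{\leqslant k})\ge k+2$ for every $k$. For \eqref{case2}: take $K=S_n$ with $I(S_n)=2n-1$ and $|V_{S_n}|=2n+2$. When $k$ is odd, write $k=2n-1$, i.e.\ $n=(k+1)/2$; then $I(S_n)=k$ and $|V_{S_n}|=2n+2=k+3$, so $\omega(\cf_{\leqslant k})\ge k+3$. For \eqref{case3}: take $K=T_n$ with $I(T_n)=12n-4$ and $|V_{T_n}|=12n$. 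When $k\equiv 8\pmod{12}$, write $k=12n-4$, i.e.\ $n=(k+4)/12\in\bn$; then $I(T_n)=k$ and $|V_{T_n}|=12n=k+4$, giving $\omega(\cf_{\leqslant k})\ge k+4$.

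The one point that needs a word of care — and the closest thing to an obstacle — is the case $\eqref{case2}$ with $k$ odd but small, or $\eqref{case3}$ with $n$ not a positive integer: one should note that $n=(k+1)/2\ge 1$ exactly when $k\ge 1$, and $n=(k+4)/12\ge 1$ exactly when $k\ge 8$, so the smallest instance $k=8$ is covered and all larger $k\equiv 8\pmod{12}$ as well; for $k$ odd the constructions $S_n$ are defined for all $n\ge 1$, matching all odd $k\ge 1$. Beyond that, the proof is purely a substitution into \autoref{clique construction1} and \autoref{clique construction2}, together with the trivial remark that enlarging $k$ only enlarges the edge set of $\cf_{\leqslant k}$, so a clique for $\cf_{\leqslant I(K)}$ remains a clique for $\cf_{\leqslant k}$ whenever $k\ge I(K)$. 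I expect no serious difficulty; the substantive work was already done in building the families $R_n$, $S_n$, $T_n$ and computing their invariants.
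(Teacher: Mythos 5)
Your proposal is correct and is essentially identical to the paper's proof: both reduce the three bounds to direct substitution into \autoref{clique construction1} and \autoref{clique construction2}, taking $R_{k+1}$, $S_{(k+1)/2}$, and $T_{(k+4)/12}$ respectively, with $V_K$ giving a clique since $I(K)\le k$. Your added check that the parameters $n$ are positive integers in each congruence class is a minor point the paper leaves implicit.
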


\begin{proof}
These are immediate consequences of \autoref{clique construction1} and \autoref{clique construction2}:
For \eqref{case1}, we have $I(R_{k+1})=k$ and $|V_{R_{k+1}}|=k+2$; for \eqref{case2} taking $k=2n-1$ we have $I(S_n)=k$ and $|V_{S_n}|=k+3$; finally for \eqref{case3} if $k=12n-4$ then $I(T_n)=k$ and $|V_{T_n}| = k+4$.
\end{proof}

\autoref{up to k} follows immediately.

\begin{figure}
	\centering
	\begin{minipage}{.3\textwidth}
	\centering
	\includegraphics[height=7cm]{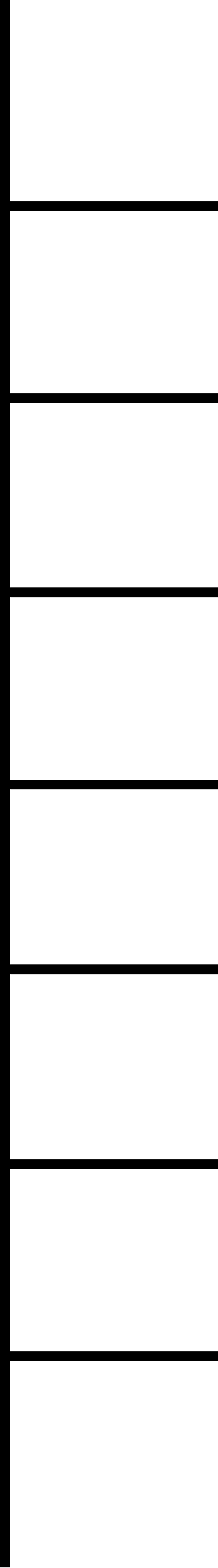}
	\subcaption{The subgraph $R_8$.}
	\label{Rn pic}
	\end{minipage}\hfill
	\begin{minipage}{.3\textwidth}
	\centering
	\includegraphics[height=7cm]{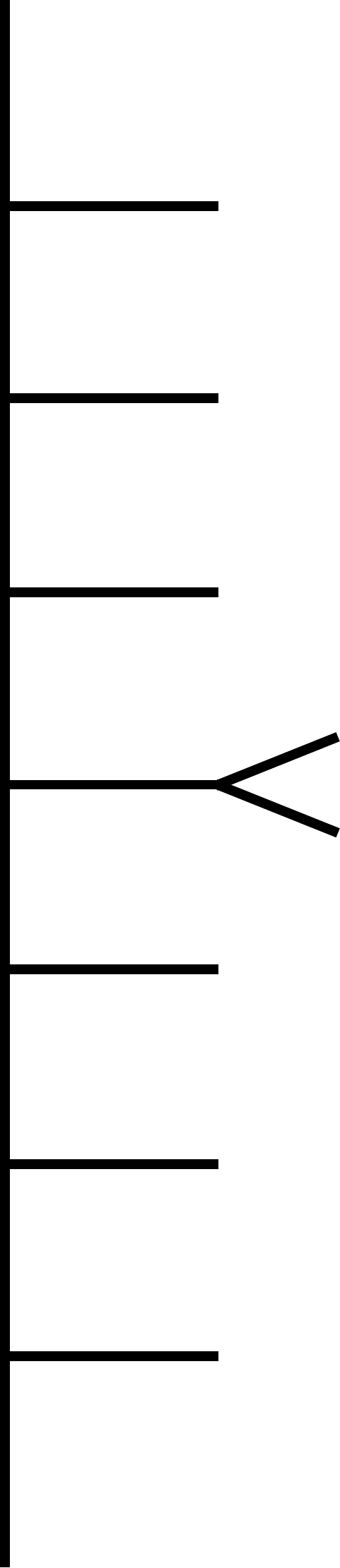}
	\subcaption{The subgraph $S_4$.}
	\label{Sn pic}
	\end{minipage}
	\begin{minipage}{.35\textwidth}
	\centering
	\includegraphics[height=7cm]{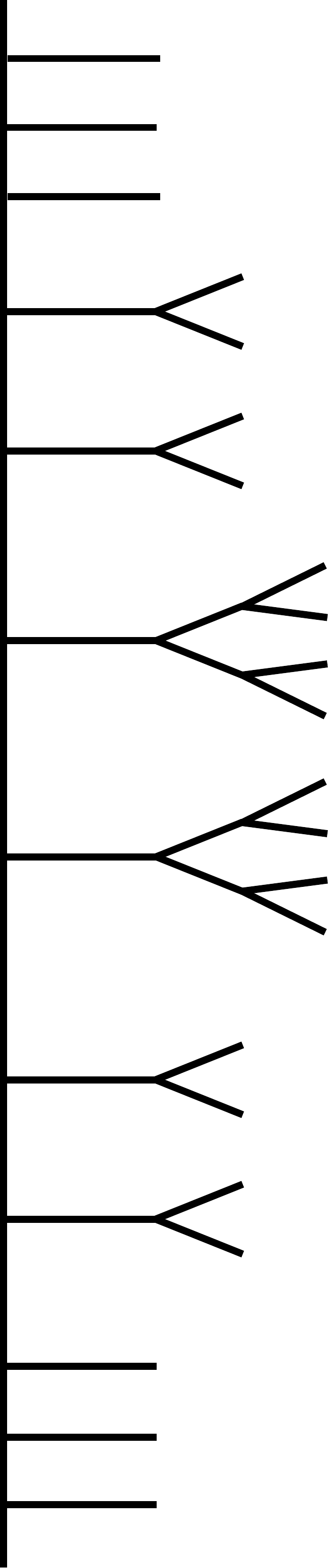}
	\subcaption{The subgraph $T_2$.}
	\label{Tn pic}
	\end{minipage}
	\caption{Subgraphs of $\cf$ with small maximum intersection.}
	\label{subgraph pics} 
\end{figure}

\subsection{Computational data for \texorpdfstring{$\omega(\cf_{\leqslant k})$}{w(F<=k)}}
There are many existing computational tools for computing graph invariants, including of course clique number. 
Using Mathematica to search through large finite subgraphs of $\cf_{\leqslant k}$, we obtained the data presented in \autoref{evidence}. 
The values of $k$ presented are exactly those, less than $100$, for which experimental data suggests that $\omega(\cf_{\leqslant k}) < 1+p(k)$. 
We make several comments about the data: \\
\begin{enumerate}
\item The list appears to include many primes that are two less than a composite.
Indeed, among $k\le 100$ the only exception is $k=37$. \\
\item While the values of the third column of \autoref{evidence} might be reduced in light of \autoref{improve Agol}, this observation will not resolve the gaps between the second and third columns. 
Indeed, for $k=7$, we have $\min\{ |\cl_r| : r>7\} =12 = 1+p(7)$, while the largest clique found experimentally in $\cf_{\leqslant  7}$ has size $10$. \\
\item It would be interesting to investigate the chromatic numbers $\chi(\cf_{\leqslant k})$ for the values of $k$ listed. However, initial attempts using Mathematica's software were insufficient to narrow the given bounds, even for $k=7$. \\
\item  For values of $k$ less than $100$ that are missing in the first column, experimental evidence produced cliques of size $1+p(k)$. For instance, we have $\omega(\cf_{\leqslant  24})=30$ (see \autoref{max clique 24}) and $\omega( \cf_{\leqslant  48})=54$. These represent the largest gaps observed (namely, size six) between $k$ and $\omega(\cf_{\leqslant k})$ (cf.~\autoref{big jumps}). \\
\end{enumerate}
We consider these phenomena to be worthy of further investigation. 

\vspace{.8cm}

\begin{figure}[H]
\begin{minipage}{\linewidth}
	\centering
	\includegraphics[width=6.5cm]{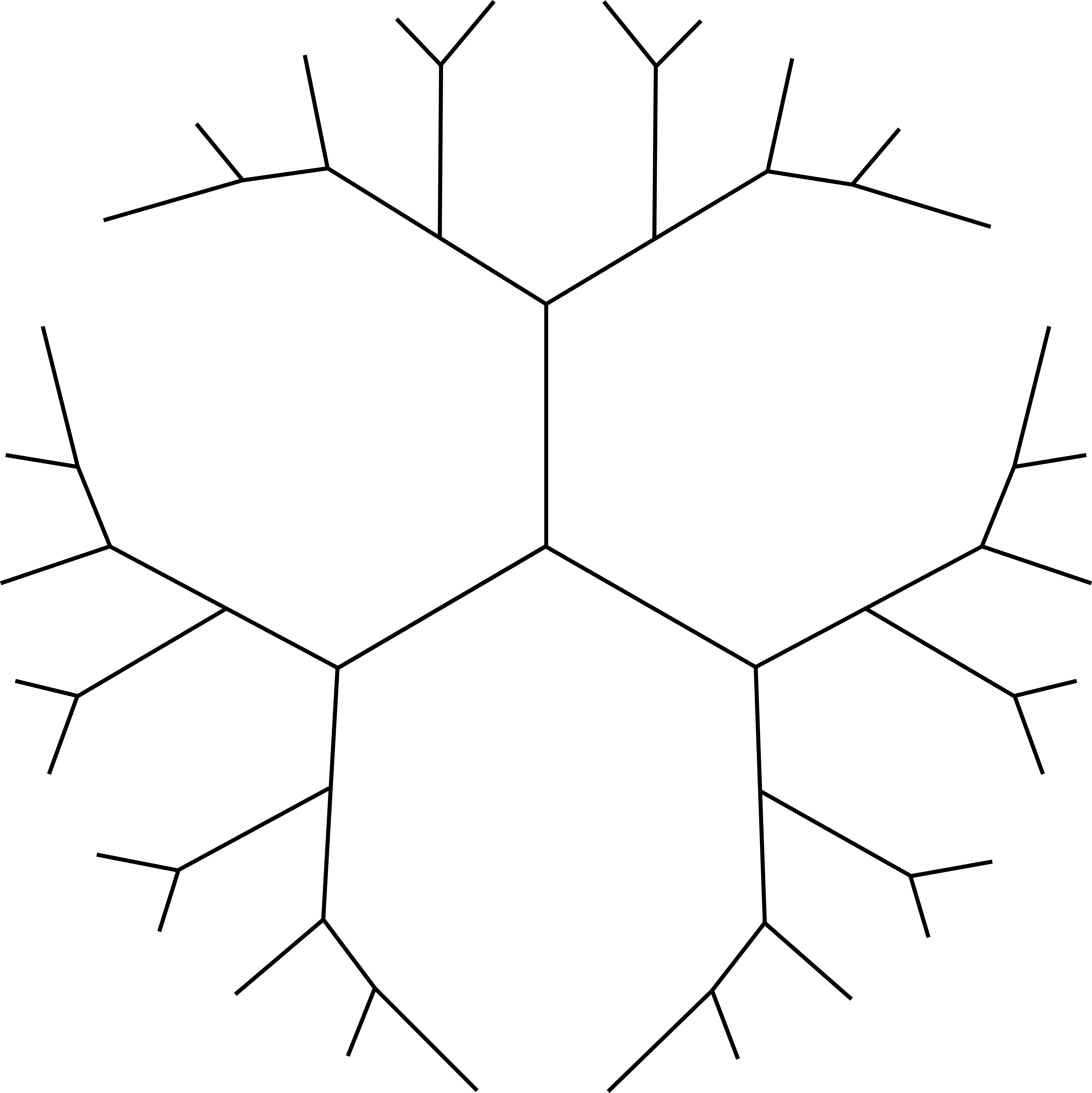}
	\caption{A clique of size $30=1+p(24)$ in $\cf_{\leqslant  24}$.}
	\label{max clique 24}

\vspace{1.2cm}

\begin{tabular}{|c|c|c|}
\hline
\multicolumn{1}{|c|}{$k$} & \multicolumn{1}{l|}{Largest clique} & \multicolumn{1}{l|}{Smallest coloring} \\ \hline
7                       & 10                                        & 12                                           \\ \hline
13                      & 16                                        & 18                                           \\ \hline
19                      & 23                                        & 24                                           \\ \hline
23                      & 27                                        & 30                                           \\ \hline
31                      & 34                                        & 38                                           \\ \hline
32                      & 36                                        & 38                                           \\ \hline
33                      & 37                                        & 38                                           \\ \hline
43                      & 46                                        & 48                                           \\ \hline
47                      & 51                                        & 54                                           \\ \hline
53                      & 57                                        & 60                                           \\ \hline
54                      & 59                                        & 60                                           \\ \hline
61                      & 65                                        & 68                                           \\ \hline
62                      & 67                                        & 68                                           \\ \hline
63                      & 67                                        & 68                                           \\ \hline
\end{tabular}
\quad
\hspace{.2cm}
\begin{tabular}{|c|c|c|}
\hline
\multicolumn{1}{|c|}{$k$} & \multicolumn{1}{l|}{Largest clique} & \multicolumn{1}{l|}{Smallest coloring} \\ \hline
67                      & 70                                        & 72                                           \\ \hline
73                      & 76                                        & 80                                           \\ \hline
74                      & 78                                        & 80                                           \\ \hline
75                      & 78                                        & 80                                           \\ \hline
79                      & 82                                        & 84                                           \\ \hline
83                      & 87                                        & 90                                           \\ \hline
84                      & 89                                        & 90                                           \\ \hline
85                      & 89                                        & 90                                           \\ \hline
89                      & 93                                        & 98                                           \\ \hline
90                      & 94                                        & 98                                           \\ \hline
91                      & 94                                        & 98                                           \\ \hline
92                      & 96                                        & 98                                           \\ \hline
93                      & 96                                        & 98                                           \\ \hline
97                      & 100                                       & 102                                           \\ \hline
\end{tabular}
\vspace{.2cm}
\caption{Exceptional values of $k$ that are less than 100 for which we are unable to decide whether $\omega(\cf_{\leqslant k}) = \chi(\cf_{\leqslant k})$.}
\label{evidence}
\end{minipage}
\end{figure}

\clearpage
\ \vspace{.5cm}\\
\newcommand{\etalchar}[1]{$^{#1}$}

\ \vspace{2cm}


\begin{thebibliography}{AAC{\etalchar{+}}}

\bibitem[Ago1]{Agol}
Ian Agol.
\newblock {Bounds on exceptional {D}ehn filling}.
\newblock {\em Geom. Topol.} {\bf 4}(2000), 431--449.

\bibitem[Ago2]{Agol2}
Ian Agol.
\newblock {Intersection of curves on the torus}.
\newblock Unpublished note.

\bibitem[AAC{\etalchar{+}}]{ICERM2}
Shuchi Agrawal, Tarik Aougab, Yassin Chandran, Marissa Loving, John Oakley,
  Roberta Shapiro, and Yang Xiao.
\newblock {Automorphisms of $k$-Curve Graphs}.
\newblock In preparation.

\bibitem[Aou]{Aougab}
Tarik Aougab.
\newblock {Local geometry of the {$k$}-curve graph}.
\newblock {\em Trans. Amer. Math. Soc.} {\bf 370}(2018), 2657--2678.

\bibitem[GGV]{GGV}
Jonah Gaster, Joshua~Evan Greene, and Nicholas~G. Vlamis.
\newblock {Coloring curves on surfaces}.
\newblock {\em Forum of Mathematics, Sigma} {\bf 6}(2018).

\bibitem[Hat]{Hatcher}
Allen Hatcher.
\newblock {Topology of numbers}.
\newblock {\em Unpublished manuscript, in preparation} (2017).

\bibitem[Her]{Herzer}
Armin Herzer.
\newblock {Chain geometries}.
\newblock In {\em Handbook of Incidence Geometry}, pages 781--842. Elsevier,
  1995.

\bibitem[Iva]{Ivanov}
Nikolai~V. Ivanov.
\newblock {Automorphism of complexes of curves and of {T}eichm{\"u}ller
  spaces}.
\newblock {\em Internat. Math. Res. Notices} {\bf 14}(1997), 651--666.

\bibitem[JMcM]{JMM}
M.~Juvan, A.~Malni\v{c}, and B.~Mohar.
\newblock {Systems of curves on surfaces}.
\newblock {\em J. Combin. Theory Ser. B} {\bf 68}(1996), 7--22.

\bibitem[SS]{Schaller}
Paul Schmutz~Schaller.
\newblock {Mapping class groups of hyperbolic surfaces and automorphism groups
  of graphs}.
\newblock {\em Compositio Math.} {\bf 122}(2000), 243--260.

\end{thebibliography}
\end{document}